\theoremstyle{plain}
\newtheorem{thm}{Theorem}[section]
\newtheorem{q}[thm]{Question}
\newtheorem{lemma}[thm]{Lemma}
\newtheorem{corollary}[thm]{Corollary}
\newtheorem{prop}[thm]{Proposition}
\theoremstyle{definition}
\newtheorem{rmk}[thm]{Remark}
\newtheorem{Construction}[thm]{Construction}
\newtheorem{defn}[thm]{Definition}
\newtheorem{claim}[thm]{Claim}
\newcommand{\Hom}{{\rm Hom}}
\newcommand{\Ext}{{\rm Ext}}
\newcommand{\Spec}{{\rm Spec \,}}
\newcommand{\Sch}{{\rm Sch}}
\newcommand{\NE}{{\rm NE}}
\renewcommand{\tilde}{\widetilde}
\newcommand{\sM}{{\mathcal M}}
\newcommand{\sS}{{\mathcal S}}
\newcommand{\A}{{\mathbb A}}
\newcommand{\F}{{\mathbb F}}
\renewcommand{\P}{{\mathbb P}}
\newcommand{\Q}{{\mathbb Q}}
\newcommand{\Z}{{\mathbb Z}}
\begin{document}
\title{Strong Rational Connectedness of Surfaces}
\author{Chenyang Xu}

\begin{abstract}
This paper focuses on the study of the strong rational connectedness
of smooth rationally connected surfaces. In particular, we show that
the smooth locus of a log del Pezzo surface is strongly rationally
connected. This confirms a conjecture due to Hassett and Tschinkel
in \cite{ht08}.
\end{abstract}

\maketitle

%%%%%%%%%%%%%%%%%%%%%%%%%%%%%
\tableofcontents

%%%%%%%%%%%%%%%%%%%%%%%%%%%%%%%%%%%%%%%%%%%%%%%%%%%%%%%%%%%%%%%%%%%%%%%%
\vspace*{6pt}
\section{Introduction}
Throughout this paper, unless otherwise mentioned, the
ground field $k$ will be an algebraically closed field of
characteristic zero.
\begin{defn}\label{defn}
A variety $X$ is called {\it rationally connected} if there is a family of proper rational curves $g : U \to Z$  and a cycle morphism $u : U \to X$ such that $u^{(2)} : U \times_Z U\to X \times_k X$ is dominant.
\end{defn}

We refer to \cite{kollarrc} for the background of rationally connected varieties. Given a smooth variety $X$, it is rationally connected if and only if there is a morphism $f: \P^1 \to X$ such that $f^*T_X$ is ample (cf. \cite{kollarrc}, IV.3.3.1 and 3.7). We call such a curve {\it a very free curve}. Similarly, a curve $f:\P^1 \to X$ is called a {\it free curve} if $f^*T_X$ is semi-ample. It is known (cf. ibid, IV.3.9.4) that for any smooth rationally connected variety $X$ over $k$, there is a unique largest nonempty open subset $X^{vf}\subset X$ with the property that $x\in X^{vf}$ if and only if there is a morphism $f:\P^1 \to X^{vf}$ such that:
\begin{enumerate}
 \item $x\in f(\P^1)$;
\item $f^*T_X$ is ample.
\end{enumerate}

We call $X^{vf}$ the {\it very free locus} of $X$. If $X^{vf}=X$, $X$ is called {\it strongly rationally connected} (cf. \cite{ht08}, 14).
\begin{thm}[\cite{kmm},  2.1; \cite{kollarrc}, IV.3.9]\label{proper} A proper and smooth rationally connected variety $X$ is strongly rationally connected.
\end{thm}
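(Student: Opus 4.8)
\medskip
\noindent\emph{Proof plan.}
I would prove the theorem by reducing it to the existence, through an arbitrary point, of a very free rational curve with image in $X$, and then producing such a curve by joining the point to the very free locus, decorating the resulting configuration with very free curves, and smoothing.

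\emph{Reduction.} Let $W\subseteq X$ be the set of points lying on some very free morphism $f:\P^1\to X$. It is nonempty, since $X$ is rationally connected and hence admits some $f:\P^1\to X$ with $f^*T_X$ ample; and it is open, because the evaluation map $\Hom^{vf}(\P^1,X)\times\P^1\to X$ out of the open locus of very free morphisms is smooth — for very free $f$ one has $H^1(\P^1,f^*T_X)=0$ and $H^0(\P^1,f^*T_X)\surj (f^*T_X)_t$ for every $t$ — so its image $W$ is open. Finally, if $f:\P^1\to X$ is very free then $f$ is a very free curve through each point of $f(\P^1)$, so $f(\P^1)\subseteq W$ and $f$ factors through the open subscheme $W$. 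Hence $W$ satisfies the property defining $X^{vf}$, and since it is the largest such open subset, $X^{vf}=W$. Thus the theorem is equivalent to the assertion $W=X$.

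\emph{Construction.} Fix $x\in X$; this is where properness is used. A proper rationally connected variety is rationally chain connected — this is standard, e.g. by passing to a proper model of the family in Definition~\ref{defn}, so that the two-point evaluation is proper and dominant, hence surjective, and any two points of $X$ lie on a connected union of rational curves. Joining $x$ to a point $y$ of the dense open set $X^{vf}$ yields $h_0:B\to X$ with $B$ a connected genus-$0$ nodal curve and marked points $q_x,q_y$ with $h_0(q_x)=x$, $h_0(q_y)=y$. I then glue onto $B$, at $q_y$, a very free curve through $y$, and glue onto each irreducible component of $B$ a further very free curve, attached at a general point with a general tangent direction. The result is a connected genus-$0$ nodal curve $B'$ — a comb over the original chain, carrying many very free teeth — together with a morphism $h:B'\to X$ such that $h(q_x)=x$ and $h^*T_X$ is ample on every tooth.

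\emph{Smoothing (the crux).} It remains to deform $(B',h)$ to a morphism from a smooth $\P^1$ while keeping $q_x\mapsto x$. The obstructions to deforming $h$ in $\Hom(B',X;\,q_x\mapsto x)$ lie in $H^1(B',h^*T_X\tensor\sI_{q_x})$, and the role of the very free teeth is precisely to kill this group: smoothing a very free tooth into its handle performs a positive elementary modification of $h^*T_X$ along that component, and after attaching enough teeth at general points these modifications make the relevant twisted restriction of $h^*T_X$ to every component of $B'$ globally generated with vanishing $H^1$, forcing $H^1(B',h^*T_X\tensor\sI_{q_x})=0$. Then $[h]$ is a smooth point of $\Hom(B',X;\,q_x\mapsto x)$ of the expected dimension, a general member of this space has smooth source $\P^1$ (all the nodes of $B'$ get smoothed) and still sends a point to $x$, and the same mechanism shows that the positivity supplied by the teeth survives the smoothing, so the resulting $f:\P^1\to X$ has $f^*T_X$ ample. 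Hence $x\in W$, and since $x$ was arbitrary, $W=X$. The main obstacle is exactly this last step — establishing the vanishing $H^1(B',h^*T_X\tensor\sI_{q_x})=0$ together with the ampleness of the bundle on the smoothed curve — that is, the theory of assembling free and very free rational curves carried out in \cite{kollarrc}, II.7, and in \cite{kmm}.
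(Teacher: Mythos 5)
The paper does not give its own proof of this theorem; it is stated as a citation to \cite{kmm}, 2.1 and \cite{kollarrc}, IV.3.9. Your outline is the standard comb-and-smooth argument from those references, and it is exactly the mechanism that the paper then generalizes to the twisted setting in Section~3 (Construction \ref{construction}, Propositions \ref{smoothing}, \ref{smoothing2}, \ref{tool}, and the application in \ref{finitepoints}). So your route is consistent with the paper's own technology.

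One point deserves sharpening. The vanishing you display, $H^1(B',h^*T_X\otimes\sI_{q_x})=0$, followed by smoothing while fixing $q_x\mapsto x$, produces a morphism $f:\P^1\to X$ with $H^1(\P^1,f^*T_X(-q_x))=0$. That forces every summand of $f^*T_X$ to have degree $\ge 0$ — so $f$ is \emph{free}, not automatically very free. The ampleness does not follow from a one-point twist. The standard (and easy) fix is to mark a second point $q'$ on the very free tooth attached at $q_y$, and verify $H^1(B',h^*T_X(-q_x-q'))=0$: on the tooth one uses that $g^*T_X(-\text{node}-q')$ has summands of degree $\ge -1$ (since $g$ is very free), and on the handle components one has already arranged enough positivity via the other teeth. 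Smoothing while fixing both $q_x$ and $q'$ then yields $H^1(\P^1,f^*T_X(-q_x-q'))=0$, i.e.\ every summand of $f^*T_X$ has degree $\ge 1$, so $f$ is very free. This is precisely what the paper does in Proposition \ref{finitepoints}: the degenerating family is $2$-pointed, with $\Sigma_1\mapsto y$ and $\Sigma_2\mapsto x$, and the conclusion there is $H^1(\P^1, g^*T_{\mathcal{X}}(-\Sigma_1-\Sigma_2))=0$. Your closing sentence gestures at this (``the same mechanism''), but as written the displayed one-point vanishing would stop at a free curve. With the second marked point inserted, your plan is correct and essentially identical to the cited argument.
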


A result of Campana and Koll\'ar-Miyaoka-Mori states
that all smooth Fano varieties are rationally connected (\cite{Ca}, \cite{KMM} or \cite{kollarrc}, V.2.1). More generally, log $\Q$-Fano varieties are rationally connected (\cite{zhang} or \cite{hm07}). A {\it log $\Q$-Fano variety} $(X,\Delta)$ is a normal projective variety $X$ with a Weil $\mathbb{Q}$-divisor $\Delta$ such that $(X, \Delta)$ is $klt$ and $K_X+\Delta$ is anti-ample. A log $\Q$-Fano surface is also called a {\it log del Pezzo surface}. \\

Given a log $\Q$-Fano variety $(X,\Delta)$, one can ask the question whether the smooth locus $X^{sm}$ of the underlying space $X$ is rationally connected. In general determining the rational connectedness of a nonproper variety can be formidable, and the answer to this question is not known except for the surface case.

\begin{thm}[\cite{km99}, 1.6]\label{rc}
 Let $(S,\Delta)$ be a log del Pezzo surface, then the smooth locus $S^{sm}$ of $S$ is rationally connected.
\end{thm}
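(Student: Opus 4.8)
The plan is to reduce the assertion to the existence of a single well-placed rational curve, then to simplify $S$ by a minimal model program, and finally to produce the curve on a resolution, which is where the real work lies. Since $S$ is normal, $\Sing S=\{p_1,\dots,p_k\}$ is finite, so $S^{sm}=S\setminus\{p_1,\dots,p_k\}$; if $S$ is smooth we are done by Theorem \ref{proper}, so assume $k\ge 1$. I claim it is enough to exhibit a single irreducible rational curve $C\subset S^{sm}$ with $C^2>0$ (self-intersection computed on the smooth surface $S^{sm}$). Such a $C$ is nef and big, so $H^1(C,N_{C/S})=0$ and $[C]$ moves in a family of dimension $h^0(C,N_{C/S})=C^2+1\ge 2$; near $[C]$ the members lie in $S^{sm}$ and remain irreducible rational curves, the family sweeps out all of $S$, and through two general points of $S^{sm}$ one finds a member. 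This provides the family of rational curves required by Definition \ref{defn}.

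Next I would reduce the Picard number. Klt surface singularities are quotient singularities, so $S$ is $\Q$-factorial and a $(K_S+\Delta)$-minimal model program is available; since $-(K_S+\Delta)$ is ample, every extremal contraction is permitted. A divisorial step $\phi\colon S\to S'$ contracts an irreducible curve $C_0$ with $C_0^2<0$ to a point, and writing $K_S+\Delta=\phi^*(K_{S'}+\Delta')+aC_0$ with $a>0$, a short computation using $C_0^2<0$ shows $(S',\Delta')$ is again a log del Pezzo surface. Since $\phi$ is an isomorphism away from one point, $(S')^{sm}$ minus a point is a dense open subset of both $(S')^{sm}$ and $S^{sm}$, so if $(S')^{sm}$ is rationally connected then so is $S^{sm}$. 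Running the program to its end, we may thus assume that either $\rho(S)=1$, or $S$ admits a conic bundle structure $S\to\P^1$ with $\rho(S)=2$.

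In the conic bundle case a general fibre $F\cong\P^1$ already lies in $S^{sm}$ but has $F^2=0$, and the points $p_i$ are nodes of reducible fibres. One promotes $F$ to a rational curve of positive self-intersection inside $S^{sm}$: the generic fibre is a smooth conic over the $C_1$-field $k(\P^1)$, hence has a rational point, so $S\to\P^1$ has a rational section; a sequence of elementary transformations centred at smooth points of fibres, all of them away from $\Sing S$, then yields a section $C$ with $C^2>0$ meeting no reducible fibre at its node, so that $C\subset S^{sm}$, and we conclude by the first paragraph.

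The essential case is $\rho(S)=1$, where every irreducible curve on $S$ has positive self-intersection, so that by the first paragraph it suffices to find one rational curve contained in $S^{sm}$. Pass to the minimal resolution $\mu\colon T\to S$: here $T$ is a smooth rational surface, $E=\Exceptional(\mu)=\bigcup E_i$ is a negative-definite union of smooth rational curves with $E_i^2\le-2$, and writing $K_T+\Gamma=\mu^*(K_S+\Delta)$ one has $\Gamma\ge 0$ (the discrepancies of a minimal resolution being $\le 0$) and $-K_T=-\mu^*(K_S+\Delta)+\Gamma$ big. By bend-and-break, $T$ is covered by rational curves $C_T$ with $-K_T\cdot C_T\le 3$; if such a covering family can be chosen with general member disjoint from $E$, then $\mu(C_T)$ is an irreducible rational curve in $S^{sm}$, automatically of positive self-intersection since $\rho(S)=1$, and we are done by the first paragraph. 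Thus everything reduces to producing a covering family of rational curves of bounded anticanonical degree on $T$ whose general member avoids the fixed negative-definite configuration $E$; this is the heart of the matter, and I expect it to be the main obstacle. I would attack it via Keel--McKernan's detailed analysis of how a rational curve of small $-K$-degree on a smooth surface can meet a prescribed curve, reducing the problem to a short explicit list of configurations $(T,E)$ treated by hand. That no soft argument will do is already visible for $S=\P(1,1,n)$ with $n\ge 1$: these have $\rho=1$ and a single klt singular point, yet form an unbounded family.
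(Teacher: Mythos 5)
Theorem~\ref{rc} is not proved in this paper at all; it is quoted from Keel--McKernan (\cite{km99}, 1.6), where it is the principal theorem of a roughly $150$-page memoir, and the present paper uses it as a black box in the proofs of \eqref{C-nonctr} and \eqref{cdm1}. Your proposal reconstructs, essentially correctly, several of the outer reductions of that work: the reduction to finding a movable rational curve in $S^{sm}$, the descent of the log del Pezzo property along a $(K_S+\Delta)$-MMP (compare Lemma~\ref{mmp}), the reduction to Picard number one or a conic bundle, and the passage to the minimal resolution $T$ with its negative-definite exceptional configuration $E\subset\supp\Gamma$, at which point the problem becomes that of producing a covering family of rational curves of bounded anticanonical degree on $T$ whose general member is disjoint from $E$. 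You also rightly point out that $\P(1,1,n)$ rules out any naive boundedness of the surfaces $S$ themselves. But you then stop at exactly the point where the theorem begins. Producing that family is not a soft deformation-theoretic statement: it is the entire content of Keel--McKernan's analysis (their ``tigers'', generic log semi-positivity of $\Omega^1_T(\log)$, and a long explicit case study), and your proposal offers no substitute for any of it. So, as you acknowledge yourself, there is an essential gap, and what you have is a plausible account of the set-up rather than a proof.

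Two smaller remarks on the parts you did write out. First, the reduction ``$C\subset S^{sm}$ irreducible rational with $C^2>0$ implies RC'' is only correct as stated for $C$ smooth: for a singular rational curve one has $-K_S\cdot C-2<C^2$, the pullback of $N_{C/S}$ to the normalization can fail to be positive, and $h^0(C,N_{C/S})$ need not equal $C^2+1$; one should work with $f\colon\P^1\to S^{sm}$ and $f^*T_S$, or first deform $C$ to a smooth curve. Second, in the conic-bundle case an elementary transformation replaces $S$ by a different surface, so the argument as phrased does not yield a section of positive self-intersection inside the original $S^{sm}$; one needs a different device (e.g.\ take a section plus several general fibres and smooth, keeping the smoothing away from the finitely many nodes of degenerate fibres). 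Neither of these is the real obstacle --- that remains the unproved core described above.
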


If we drop the properness assumption, it is an open question whether rational connectedness is equivalent to strong rational connectedness (\cite{kollarrc}, IV.3.9.5). The main purpose of this paper is to show that the smooth loci of log del Pezzo surfaces are not only rationally connected but also strongly rationally connected, i.e., these two notions coincide in this case. This confirms a conjecture due to Hassett and Tschinkel (\cite{ht08}, Conjecture 19).
\begin{thm}\label{main}
 Let $(S,\Delta)$ be a log del Pezzo surface, then the smooth locus $S^{sm}$ is strongly rationally connected.
\end{thm}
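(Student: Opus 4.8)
The strategy is to pass to a resolution, reduce the assertion to a covering property by free curves, and then establish that property via an inductive construction of rational curves avoiding the singular points of $S$.

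Let $\pi\colon \tilde S\to S$ be the minimal resolution. Since $(S,\Delta)$ is klt, the singularities of $S$ are quotient singularities, so $\tilde S$ is a smooth projective rational surface (rationality follows from Theorem~\ref{rc}) and $E:=\pi^{-1}(\Sing S)=\bigcup_i E_i$ is a negative--definite union of smooth rational curves whose dual graph is determined by the singularity types. As $\pi$ is an isomorphism over $S^{sm}$, it identifies $S^{sm}$ with $\tilde S\setminus E$ and $T_{S^{sm}}$ with $T_{\tilde S}|_{\tilde S\setminus E}$, so a curve of $S^{sm}$ is (very) free if and only if the corresponding curve of $\tilde S$ is (very) free and disjoint from $E$. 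The key observation is that an irreducible curve $C\subset\tilde S$ with $C^2\ge 0$ and $C\cdot E_i=0$ for all $i$ is automatically disjoint from $E$: it cannot equal any $E_i$, whose self--intersections are negative, so each $C\cdot E_i$ is a nonnegative local intersection number that vanishes only when $C\cap E_i=\varnothing$. Therefore Theorem~\ref{main} reduces to
\[
(\star)\qquad\text{every point of }\tilde S\setminus E\text{ lies on a free rational curve of }\tilde S\text{ whose class lies in }E^{\perp}.
\]
Granting $(\star)$: Theorem~\ref{rc} already provides a very free rational curve in $S^{sm}$; given any $x\in S^{sm}$, take a free $E^{\perp}$--curve $C$ through $x$, attach to it at general points sufficiently many free $E^{\perp}$--curves supplied by $(\star)$, and smooth the resulting comb while keeping it through $x$; by \cite{kollarrc}, II.7 the smoothing is very free, its class lies in $E^{\perp}$, hence it is disjoint from $E$, i.e. contained in $S^{sm}$. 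Thus $x\in (S^{sm})^{vf}$, which proves Theorem~\ref{main}.

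To prove $(\star)$ one exploits the log del Pezzo hypothesis through the minimal model program. The $(K_S+\Delta)$--MMP produces a sequence of $(K_S+\Delta)$--negative divisorial contractions $S=S_n\to S_{n-1}\to\cdots\to S_0$ ending in a Mori fibre space $S_0\to Z$ with $-(K_{S_0}+\Delta_0)$ relatively ample; since $-(K_S+\Delta)$ is ample, a short intersection computation shows each such contraction preserves the log del Pezzo property, and hence the shape of the resolution graphs. It therefore suffices to show (a) that $(\star)$ descends along a divisorial contraction $\phi\colon S'\to S''$ of log del Pezzo surfaces --- one pulls back a free $E^{\perp}$--curve through a general point of $S''$ and, if it meets the new exceptional curves, corrects it by attaching a free tooth, the relevant intersection numbers being controlled by the $(K+\Delta)$--negativity of the contracted curve --- and (b) that $(\star)$ holds in the two base cases $Z=\P^1$ and $Z=\mathrm{pt}$.

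The case $Z=\P^1$, a log del Pezzo conic bundle, is comparatively easy: all but finitely many fibres are smooth conics missing $\Sing S_0$, and these are free $E^{\perp}$--curves covering the complement of finitely many fibres, while points over the degenerate values are reached using multisections. The hard case is $\rho(S_0)=1$: here there is no fibration to exploit, all effective curves are numerically proportional to $-(K_{S_0}+\Delta_0)$, and the actual difficulty --- which is the crux of the argument, and already the source of the main difficulties in \cite{km99} --- is to produce \emph{rational} curves through an arbitrary point of $S_0^{sm}$ that \emph{avoid} the finitely many singular points. I would approach this by bend--and--break: starting from a high--degree curve in $|{-}m(K_{S_0}+\Delta_0)|$ chosen to miss $\Sing S_0$, impose enough general points to force it to degenerate, and then argue, using the negative definiteness and the explicit shape of the resolution graphs of the quotient singularities together with the global positivity of $-(K_{S_0}+\Delta_0)$, that one can extract a rational component which is free, passes through the prescribed point, and still avoids $\Sing S_0$. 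Keeping this last property under control, against the tendency of degenerating curves to be drawn toward the singular points, is the delicate step; once it is secured, $(\star)$ --- and with it Theorem~\ref{main} --- follows.
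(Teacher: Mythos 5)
Your overall strategy has real overlap with the paper at the level of ``run an MMP and reduce to a comb/smoothing argument,'' but the route is different and, as written, it has a genuine gap exactly at the step you yourself flag as delicate.

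The paper's argument has two steps. First, for any irreducible curve $C$ in $S^{sm}$, a \emph{$C$-noncontracted} MMP (one never contracts the image of $C$) terminates, because $\kappa(S,\Delta)=-\infty$, either in a $\rho=1$ log del Pezzo or a Fano fibration over a curve; in both cases $C$ meets a free curve in $S^{sm}$, hence $C$ meets $S^{vf}$. This shows $S^{sm}\setminus S^{vf}$ is a \emph{finite} set of points (Prop.~\ref{cdm1}). Second, to put those finitely many points into $S^{vf}$, the paper degenerates a family of very free $2$-pointed curves to a limit stable map through $y$ and then smooths it. The crucial move is that the limit is taken in $\overline{\sM}_{0,2}(\sS,d)$ for the \emph{smooth Deligne--Mumford stack} $\sS$ with coarse space $S$: because $\sS$ is smooth, the deformation theory of twisted stable maps behaves like the classical theory on a smooth variety, so one can attach very free teeth, kill $H^1$, smooth, and then argue the general smoothing misses the finitely many stacky points and hence lies in $S^{sm}$ (Thm.~\ref{tool} and Prop.~\ref{finitepoints}). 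The stack is what makes ``the limit might run into $\Sing S$'' a non-issue.

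Your proposal, by contrast, works on the minimal resolution $\tilde S$ and tries to prove the ``strong covering'' statement $(\star)$ directly: through every point of $\tilde S\setminus E$ there is a free curve whose class lies in $E^{\perp}$. The core difficulty is then the $\rho=1$ base case (and, for the same reason, the descent step): you want a bend-and-break argument producing, through an \emph{arbitrary} point $y\in S_0^{sm}$, a free rational component still avoiding $\Sing S_0$. You write explicitly that ``keeping this last property under control, against the tendency of degenerating curves to be drawn toward the singular points, is the delicate step; once it is secured, $(\star)$ follows.'' That delicate step is exactly the substance of the theorem: it is where the paper introduces twisted stable maps and Olsson's deformation theory, and there is no obvious way to carry it out on $\tilde S$ alone (negative-definiteness of $E$ and positivity of $-(K+\Delta)$ do not by themselves prevent a degenerating family from concentrating on $E$). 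So the proposal correctly identifies the obstacle but does not overcome it, and the missing step is not a routine verification --- it is the new idea of the paper. Note also that in the paper's dichotomy the $\rho=1$ case is the \emph{easy} one (any two curves meet, so $C$ meets the very free curve given by Theorem~\ref{rc}); the hard residual case is the finitely many isolated points handled by the stack argument, which your scheme does not isolate.

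There is also a smaller inaccuracy in the reduction ``$(\star)\Rightarrow$ Theorem.'' You attach \emph{free} $E^{\perp}$-teeth to a free handle through $x$ and invoke \cite{kollarrc}, II.7 to say the smoothing is very free; smoothing a comb with only free teeth produces a free curve, not a very free one. The fix is to use the very free curve of Theorem~\ref{rc} (which lies in $S^{sm}$, hence automatically has $E^{\perp}$ class) as a tooth; but to attach it you must know it meets your handle $C$ through $y$ at a general point, and that $C$ is not entirely contained in $S^{sm}\setminus S^{vf}$ --- which is precisely the finiteness statement the paper proves first and you do not. So even this reduction quietly relies on an input your plan has not supplied.
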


Applying the same technique, we can prove the following result.
\begin{thm}\label{duval}
Let $S$ be a projective surface which has at worst Du Val singularities. If $S^{sm}$ is rationally connected, then $S^{sm}$ is indeed strongly rationally connected.
\end{thm}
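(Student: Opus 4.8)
The plan is to pass to the minimal resolution and reduce the problem to constructing very free rational curves on a smooth rational surface in numerical classes orthogonal to the exceptional locus, which is then handled by the method of Theorem~\ref{main}. Concretely, let $\pi\colon\tilde S\to S$ be the minimal resolution. Since $S$ has at worst Du Val singularities it is canonical and Gorenstein, so $\pi$ is crepant: $K_{\tilde S}=\pi^{*}K_{S}$, the exceptional locus $E=\bigcup_{i}E_{i}$ is a disjoint union of $ADE$ chains of smooth rational $(-2)$-curves with negative definite intersection matrix, and $\pi$ restricts to an isomorphism over $S^{sm}$. As $S^{sm}\cong\tilde S\setminus E$ is dense in $\tilde S$ and rationally connected, $\tilde S$ is a smooth projective rationally connected — hence rational — surface, and by Theorem~\ref{proper} it is strongly rationally connected.

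Over $\tilde S\setminus E$ the isomorphism $\pi$ identifies $T_{\tilde S}$ with $T_{S^{sm}}$, so a very free rational curve in $S^{sm}$ is exactly a nonconstant morphism $f\colon\P^{1}\to\tilde S$ whose image $C$ is disjoint from $E$ and for which $f^{*}T_{\tilde S}$ is ample. If $C$ is a smooth rational curve and $f$ its normalisation, then $f^{*}T_{\tilde S}\cong\sO(2)\oplus\sO(C^{2})$, which is ample iff $C^{2}\ge 1$; and an irreducible curve with $C^{2}\ge1$ is distinct from every $E_{i}$ (as $E_{i}^{2}=-2$), hence is disjoint from $E_{i}$ iff $C\cdot E_{i}=0$. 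So it suffices to prove: for every $\tilde x\in\tilde S\setminus E$ there is a smooth very free rational curve $C\subset\tilde S$ with $\tilde x\in C$ and $C\cdot E_{i}=0$ for all $i$.

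The key point is that starting from a very free curve $\Gamma$ through $\tilde x$ (which exists since $\tilde S$ is strongly rationally connected) one cannot delete its intersection with $E$ by a deformation, because $\Gamma\cdot E_{i}$ is a numerical invariant — the numerical class must be changed. Following the mechanism of Theorem~\ref{main}, I would run a sequence of blow-downs $h\colon\tilde S\to S_{\min}$ with $S_{\min}=\P^{2}$ or $\F_{n}$ ($n\neq1$), chosen (after first contracting the $(-1)$-curves that interfere with $E$) so that the image of $E$ sits in a simple position, and keep track of the — possibly infinitely near — blow-up centres of $h$, in particular of those lying over $\Sing S$. On $S_{\min}$ one has complete control of linear systems of rational curves, so a sufficiently positive general such curve through $h(\tilde x)$ with prescribed jets at the centres of $h$ has strict transform $C$ on $\tilde S$ that passes through $\tilde x$, satisfies $C^{2}\ge1$, and meets $E$ only over those centres; choosing the jets over $\Sing S$ so that $C$ actually avoids $E$ — here the crepancy $K_{\tilde S}\cdot E_{i}=0$ and the $(-2)$-curve structure make the incidence conditions compatible, exactly as in the log del Pezzo case — gives $C\cdot E_{i}=0$ for all $i$. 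By the reduction above, $C$ descends to a very free curve through $\pi(\tilde x)$ in $S^{sm}$; since $\tilde x$ was arbitrary, $S^{sm}$ is strongly rationally connected.

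I expect the main obstacle to be precisely this last step: the invariance of $C\cdot E$ under deformation forces one to build very free rational curves directly in classes orthogonal to the whole configuration $E$ while simultaneously imposing passage through a prescribed point and keeping the restricted tangent bundle ample; arranging this by descending to and re-ascending from the minimal model, and controlling how $E$ together with the incidence conditions behaves under the intervening blow-downs, is the technical heart of the argument and the place where the machinery developed for Theorem~\ref{main} is genuinely reused.
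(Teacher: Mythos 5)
Your approach --- pass to the minimal resolution $\pi\colon\tilde S\to S$ and try to build very free rational curves on $\tilde S$ in classes orthogonal to the exceptional configuration $E$ --- is a genuinely different route from the paper's, which runs a $K_S$-MMP directly on the singular surface $S$ (not on $\tilde S$). The paper observes that since $S$ is Gorenstein canonical, each divisorial contraction $p_i\colon S_i\to S_{i+1}$ collapses the contracted curve $E_i$ to a \emph{smooth} point $P_i$, so $p_i(S_i^{sm})=S_{i+1}^{sm}$ and rational connectedness of the smooth locus is preserved downward; the base case $S_n$ (a Mori fibre space) is handled via \eqref{main} or \eqref{finitepoints}, and then one inducts upward by using \cite{km99}, 3.3 (each contracted $E_i$ meets $\Sing S_i$ in at most one point, of type $A_r$), the stacky cover of \eqref{quot}, and the smoothing theorem \eqref{tool} to show $E_i\cap S_i^{vf}\neq\emptyset$. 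Your reduction to the statement ``for every $\tilde x\in\tilde S\setminus E$ there is a smooth very free rational curve $C$ with $\tilde x\in C$ and $C\cdot E_i=0$ for all $i$'' is correct, and you rightly note that $C\cdot E$ is a deformation invariant, so no amount of deforming an arbitrary very free curve will produce one disjoint from $E$; the class itself must be built orthogonal to $E$ from the start.

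The gap is that you never actually produce such a curve $C$. The ``descend to a smooth minimal model $S_{\min}$, impose jets at the blow-up centres, and use crepancy to make the incidence conditions compatible'' step is a plan, not an argument, and it faces real obstacles that you do not address. Nothing guarantees that the $(-1)$-curves contracted by $h\colon\tilde S\to S_{\min}$ are disjoint from $E$; after contracting them, the images $h(E_i)$ need not remain $(-2)$-curves, the $ADE$ configuration is destroyed, distinct $E_i$ may be pushed together, and the (possibly infinitely near) blow-up centres of $h$ can sit on $h(E)$ in ways that make ``choose jets so the strict transform avoids $E$'' a nontrivial --- and sometimes overdetermined --- interpolation problem. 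Moreover ``the crepancy $K_{\tilde S}\cdot E_i=0$ \dots makes the incidence conditions compatible, exactly as in the log del Pezzo case'' does not refer to anything actually established in the proof of \eqref{main}, which contains no such jet computation; it runs a $C$-noncontracted MMP on $S$ itself and invokes Keel--M$^c$Kernan's theorem, rather than manipulating linear systems on a smooth minimal model. So while the reduction at the start of your proposal is sound and the obstruction you identify is exactly the right one, the heart of the matter --- constructing, through an \emph{arbitrary} point of $\tilde S\setminus E$, a very free rational curve in $E^\perp$ --- is left open. This is precisely what the paper's combination of crepant MMP on $S$, the $A_r$ bound on $E_i\cap\Sing S_i$, and the twisted smoothing machinery is designed to supply.
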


The strategy to prove (\ref{main}) consists of two
steps: first, using an argument from the Minimal Model Program (MMP),
and applying Keel-M$^{c}$Kernan's result (\ref{rc}), we prove
(\ref{cdm1}) saying that there are at most finitely many points in
$S^{sm}$ which are not in the very free locus. In the second step,
we show that indeed $S^{sm}$ is equal to its very free
locus in the following way: Given a smooth point $y$ of $S$, we choose a
sequence of general points $y_t$ to specialize to it. Applying
Keel-M$^c$Kernan's theorem again, we can connect $y_t$
 with a fixed general point $x\in S$ by a proper rational curve contained in $S^{sm}$. Thus it degenerates to
  a limiting object containing $x$ and $y$. We show
that after adding very free curves to the limiting object, we can
smooth it away from the singular points of $S$ in the way that the smoothing  curves are very free and still pass $y$.

For the purpose of doing deformation, we  study (twisted) rational curves on the
smooth Deligne-Mumford stack $\sS$, which realizes $S$ as its coarse
moduli space. The theory of the moduli space of twisted stable maps
with Deligne-Mumford stacks being the target spaces
(cf. \cite{av}) provides a fine framework. The advantage
of considering $\mathcal{S}$ is that its smoothness allows us to do
the deformation theory in a similar way as the classical case when
the target space is a smooth variety (see Section 2 and 3). We get
the limiting object, which could be reducible and twisted, from the
properness of $\overline{\mathcal{M}}_{0,2}(\mathcal{S},d)$. Since the first step
allows us to attach teeth to general points in each component of the
image of the limiting object, by imitating the analogue argument for
the classical case, we show that after attaching enough teeth, it
eliminates all the obstructions of deforming the morphism with $x$ and $y$ fixed. Therefore, from the the computation on the first order deformation, we conclude that we can smooth 
the limiting object with the additional property that the image still contains $y$. The smoothing curve will be a very free curve, which we can
choose to be entirely contained in the scheme locus of $\mathcal{S}$
(see (\ref{finitepoints})).

For some special log del Pezzo surfaces, the strong
rational connectedness of the smooth loci has been established in
\cite{ht08} and \cite{kn}. These results have been applied to
prove certain weak approximation results for rational surfaces over
the function field of curves. In \cite{xu}, we use the results
of this note to establish weak approximations for more del Pezzo
surfaces.

From our argument, it is also interesting to ask the
following question: if we assume $U$ to be a nonproper smooth
rationally connected surface, and $S\supset U$ a normal
compactification, then what type of singularities can $S$ have? We
give the following answer.

\begin{thm}\label{singularity}
Let $S$ be a normal surface such that the smooth locus $S^{sm}$ is rationally connected. Then  $S$ contains only rational singularities.
\end{thm}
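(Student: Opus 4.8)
The plan is to reduce to a statement about the exceptional divisor of a resolution and to exploit rational connectedness of $S^{sm}$ to force the relevant cohomology to vanish. Let $\pi : \widetilde{S} \to S$ be a resolution of singularities which is an isomorphism over $S^{sm}$, and let $E = \Exceptional(\pi)$ with its reduced structure. To prove that $S$ has only rational singularities it suffices to show $R^1\pi_* \sO_{\widetilde{S}} = 0$, and since the question is local on $S$ this reduces to showing that for each singular point $p \in S$ the stalk $(R^1\pi_*\sO_{\widetilde{S}})_p$ vanishes; equivalently, after passing to a suitable projective birational model, $H^1(\widetilde{S}, \sO_{\widetilde{S}})$ together with the formal neighborhood data controls the obstruction. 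The cleanest route is: first reduce to the case $S$ \emph{projective} (compactify $S^{sm}$ keeping it smooth and rationally connected, replacing $S$ by a normal projective surface; this does not change the analytic germs at the singularities we care about), then compare $H^1(\widetilde S,\sO_{\widetilde S})$ with $H^1(S,\sO_S)$ via the Leray spectral sequence for $\pi$, which gives an exact sequence $0 \to H^1(S,\sO_S) \to H^1(\widetilde S, \sO_{\widetilde S}) \to H^0(S, R^1\pi_*\sO_{\widetilde S}) \to H^2(S,\sO_S)$.

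Next I would pin down $H^i(S,\sO_S)$ for $i = 1,2$. Since $S^{sm}$ is rationally connected and $S$ is normal projective, there is a proper rational curve through a general point, and more: rational connectedness of the smooth locus gives, by Theorem \ref{proper} applied after resolving (or directly), enough rational curves to conclude $H^0(\widetilde S, \Omega^1_{\widetilde S}) = 0$ and $H^0(\widetilde S, \omega_{\widetilde S}) = 0$, hence $h^1(\widetilde S,\sO_{\widetilde S}) = 0$ and $h^2(\widetilde S, \sO_{\widetilde S}) = p_g(\widetilde S) = 0$ by Hodge symmetry on the smooth projective surface $\widetilde S$. Indeed a smooth projective surface dominated by, or containing a dense family of, rational curves through a general point has $q = p_g = 0$; concretely, a very free (or even just free, moving) rational curve $f : \P^1 \to \widetilde S$ pulls back $\Omega^1_{\widetilde S}$ and $\omega_{\widetilde S}$ to bundles of negative degree, killing global sections. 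Feeding $h^1(\widetilde S,\sO_{\widetilde S}) = 0$ into the Leray sequence forces $H^0(S, R^1\pi_*\sO_{\widetilde S}) = 0$, and since $R^1\pi_*\sO_{\widetilde S}$ is a sheaf supported on the finite set of singular points, $H^0$ of it vanishing means the sheaf itself vanishes. Therefore every singularity of $S$ is rational.

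The main obstacle is the first reduction step: making sure one can compactify a nonproper normal surface $U = S^{sm}$ to a \emph{projective normal} surface without destroying rational connectedness of the smooth locus or altering the germ at the points of $S \setminus S^{sm}$. For this I would take any projective normal compactification $\bar S$ of $U$, note that the singularities lying on $U$'s original closure in $S$ are unchanged, and observe that the new boundary points added at infinity also have rational smooth locus in their neighborhood because they are smooth points of $\bar S$ after a further normalized blow-up, or one argues directly that $R^1\pi_*\sO$ near those points is governed only by $H^1(\sO_{\widetilde S})$ which we have shown vanishes. A slightly more careful alternative avoiding compactification: work with a partial resolution $\pi$ over the affine (or arbitrary) $S$ and use that $R^1\pi_*\sO_{\widetilde S}$ is a coherent sheaf with finite support, so its vanishing can be checked after base change to the henselization at each point $p$, where the exceptional fiber $E_p$ is a proper curve; then $(R^1\pi_*\sO)^\wedge_p \cong H^1(E_p, \sO_{nE_p})$ for $n \gg 0$ by the theorem on formal functions, and one shows this group vanishes because the components of $E_p$ are rational and the dual graph is a tree — which in turn follows from the existence of many free rational curves on $\widetilde S$ meeting $E_p$ transversally, forcing $E_p$ to have arithmetic genus zero with tree-like configuration (a nonzero cycle in the dual graph or a positive-genus component would obstruct moving a rational curve across it). I expect the bookkeeping in this formal-functions version, tracking which rational curves survive and pass through neighborhoods of $E_p$, to be the genuinely delicate part; the cohomological vanishing once $q = p_g = 0$ is established is then routine.
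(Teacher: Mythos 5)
Your Leray spectral sequence setup is the right idea and is in fact the route the paper takes, but there is a genuine gap at the decisive step, and the gap is precisely the content of the theorem. After imposing $H^1(\widetilde S,\sO_{\widetilde S})=0$, the five-term exact sequence
$$0\to H^1(S,\sO_S)\to H^1(\widetilde S,\sO_{\widetilde S})\to H^0(S,R^1\pi_*\sO_{\widetilde S})\to H^2(S,\sO_S)\to H^2(\widetilde S,\sO_{\widetilde S})$$
gives only an \emph{injection} $H^0(S,R^1\pi_*\sO_{\widetilde S})\hookrightarrow H^2(S,\sO_S)$. Your sentence ``feeding $h^1(\widetilde S,\sO_{\widetilde S})=0$ into the Leray sequence forces $H^0(S,R^1\pi_*\sO_{\widetilde S})=0$'' is therefore not a valid deduction: you still need $H^2(S,\sO_S)=0$. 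You announce that you will pin down $H^i(S,\sO_S)$ for $i=1,2$, but what you actually compute is $H^i(\widetilde S,\sO_{\widetilde S})$, and the two are different; in fact with both $H^1(\widetilde S,\sO_{\widetilde S})=0$ and $H^2(\widetilde S,\sO_{\widetilde S})=0$ the sequence degenerates to an isomorphism $H^0(S,R^1\pi_*\sO_{\widetilde S})\cong H^2(S,\sO_S)$, so claiming $H^2(S,\sO_S)=0$ is equivalent to what you are trying to prove, and the argument is circular.

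The missing input is the one the paper supplies: by Serre duality on the normal projective surface $S$, $H^2(S,\sO_S)\cong H^0(S,\omega_S)$, and since $\omega_S$ is reflexive, $H^0(S,\omega_S)=H^0(S^{sm},\omega_{S^{sm}})$. This is where rational connectedness of $S^{sm}$, as opposed to mere rationality of $\widetilde S$, is genuinely used via the argument of \cite{kollarrc}, IV.3.8: through a general point of $S^{sm}$ there is a free rational curve whose pullback of $\omega_{S^{sm}}$ has negative degree, killing global sections. This is strictly stronger than $p_g(\widetilde S)=0$, because sections of $\omega_{S^{sm}}$ may have poles along the exceptional divisor and hence not extend to $\widetilde S$; indeed the discrepancy between $H^0(S^{sm},\omega_{S^{sm}})$ and $H^0(\widetilde S,\omega_{\widetilde S})$ is exactly the obstruction to rational singularities that the theorem is about. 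Your alternative formal-functions sketch has a separate problem: a tree of rational curves is a necessary but not sufficient condition for a rational surface singularity (one needs $p_a(Z)\le 0$ for all effective cycles $Z$ on the exceptional fibre, which involves multiplicities), so even if you could force the dual graph to be a tree of rational components you would not yet obtain $H^1(E_p,\sO_{nE_p})=0$ for all $n$.
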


 The content of this note is organized as follows: in
Sections 2 and 3, we develop a twisted version of the classical
theory of morphisms from curves to varieties; then we apply it to show our main theorems (\ref{main}) and (\ref{duval}) in
Section 4;  in
Section 5, we give an example due to Koll\'ar who constructs a  log del Pezzo surface
with $A_1$ singularities in characteristic 2, whose smooth locus
does not contain any free curve.

\noindent {\bf Acknowledgement:} The author would like to thank Igor
Dolgachev, Brendan Hassett, Amit Hogadi, Amanda Knecht, J\'anos
Koll\'ar and Jason Starr for useful conversations and emails. He especially wants to
thank Dan Abramovich for suggesting a crucial idea to prove
Theorem (\ref{main}). Thanks to Garving Luli  for his help on
English and to the referee for enormous helpful suggestions on the exposition; any remaining mistakes are my own. The author was partially
supported by Clay liftoff fellowship. Part of the work was done when
the author visited Universit\"at Duisburg-Essen. The author wishes
to thank H\'el\`ene Esnault for her hospitality during the visit.
This material is also based upon work when the author was in
Institute for Advanced Study and supported by the NSF under
agreement No. DMS-0635607. The author was partially supported by NSF research grant no: 0969495.

\section{Twisted curves and twisted stable maps }
In this section, we give a short introduction to the theory of twisted curves and $n$-pointed twisted stable maps. We refer to \cite{av} and \cite{ol} for more details.

Let $S$ be a scheme and $\mathcal{C}/S$ a proper flat  Deligne-Mumford stack  whose fibers are
purely one-dimensional and geometrically connected with at most nodal singularities. Let $\mathcal{C} \to C$ be the coarse moduli space of $\mathcal{C}$, and let $C^{sm} \subset C$
be the open subset where $C \to S$ is smooth. Assume that the inverse image $\mathcal{C} \times_C C^{sm} \subset \mathcal{C}$ is equal
to the open substack of $\mathcal{C}$ where $\mathcal{C} \to S$ is smooth and that for every geometric point $\bar{s} \to S$ the
map $\mathcal{C}_{\bar{s}} \to C_{\bar{s}}$ is an isomorphism over some dense open subset of $C_{\bar{s}}$. Then the coarse space $C$ is a
nodal curve over $S$, and it is well known that (cf. \cite{ol}, 2.2) for any geometric point mapping to
a node $\bar{s} \to C$, there exists an \'etale neighborhood $\Spec(A) \to C$ of $\bar{s}$ and an \'etale morphism
$$\Spec(A) \to \Spec_S(\mathcal{O}_S[x, y]/xy-t) $$
for some $t \in \mathcal{O}_S$, such that the pullback $\mathcal{C}
\times_C \Spec(A)$ is isomorphic to
$$[\Spec(A[z,w]/zw = t', z^n = x,w^n = y)/\Gamma] $$
for some element $t' \in \mathcal{O}_S$. Here $\Gamma$ is a finite cyclic group of order $n$, such that if
$\gamma \in \Gamma$ is a generator then $\gamma(z) = \xi z$ and $\gamma(w) = \xi'w$ for some primitive $n$-$th$ roots of unity $\xi$ and $\xi'$.
The stack is called {\it balanced} if \'etale locally there exists such a description with $\xi' = \xi^{-1}$.

\begin{defn}[\cite{av}, 4.1.2] A {\it twisted curve} is a stack $\mathcal{C} \to S$ as above such that the action at
each nodal point $\bar{s}\to \mathcal{C} $  is balanced. A twisted curve $\mathcal{C}\to S$ has genus $g$ if the genus of $C_{\bar{s}}$ is $g$ for
every geometric point $\bar{s} \to S$. An {\it $n$-pointed twisted curve} is a twisted curve $\mathcal{C} \to  S$ together with a
collection of disjoint closed substacks $\{\Sigma_i\}^n _{i=1}$ of $\mathcal{C}$ such that:
\begin{enumerate}
\item each $\Sigma_i \subset \mathcal{C}$ is contained in the smooth locus of $\mathcal{C} \to S$;
\item the stacks $\Sigma_i$ are \'etale gerbes over $S$;
\item if $\mathcal{C}_{gen}$ denotes the complement of the $\Sigma_i$ in the smooth locus of $\mathcal{C} \to S$, then $\mathcal{C}_{gen}$ is a scheme.
\end{enumerate}
\end{defn}

\begin{rmk}
Unlike \cite{av}, we will only use balanced twisted curves in this paper, so we usually omit the adjective ``balanced''. We note that Abramovich and Vistoli prove that the moduli stack parametrizing the twisted stable maps from balanced twisted curves is an open and closed substack of the stack parametrizing all twisted stable maps (cf. \cite{av} 8.1.1).
\end{rmk}

In \cite{ol}, Olsson obtains the following description of the versal deformation of a given $n$-pointed twisted curve. Let $\mathcal{M}^{tw}_{g,n}$ denote the fibered category over $\Z$ which to any scheme $T$ associates the
groupoid of $n$-marked genus $g$ twisted curves $(\mathcal{C}, {\Sigma_i})$ over $T$, and let $\mathcal{S}_{g,n}$ denote the fibered category over $\Z$ which to any scheme $S$ associates the groupoid of all (not necessarily
stable) $n$-pointed genus $g$ nodal curves $C/S$. Now consider a field $k$ and an object $(\mathcal{C}, {\Sigma_i}) \in \mathcal{M}^{tw}_{g,n}(k)$. Let $(C, {\sigma_i})$ be its coarse
moduli space, and let $R$ be a versal deformation space for the object $(C, {\sigma_i}) \in \mathcal{S}_{g,n}(k)$. Let
$q_1, . . . , q_m \in C$ be the nodes,  $r_i$ the order of the stabilizer group of a point of $\mathcal{C}$ lying
above $q_i$. It is well-known (cf. \cite{dm}) that there is a smooth divisor $D_i \subset \Spec(R)$ classifying deformations, where
$q_i$ remains a node. In other words, if $t_i \in R$ is an element defining $D_i$ then in an \'etale neighborhood
of $q_i$ the versal deformation $\tilde{C} \to \Spec(R)$ of $(C, {\sigma_i})$ is isomorphic to
$$\Spec(R[x, y]/xy - t_i).$$ As a corollary to the above result, Olsson shows the following.
\begin{prop}[\cite{ol}, 1.10 and 1.11]\label{versal}
(Notations as above.) A versal deformation space for the twisted
curve $(\mathcal{C}, {\Sigma_i})$ is given by
$$R[z_1, . . . , z_m]/(z^{r_1}_1 - t_1, . . . , z^{r_m}_m -t_m).$$
\end{prop}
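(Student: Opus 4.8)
The plan is to reduce the statement to an \'etale-local computation around each node of the coarse curve $C$, where a balanced twisted structure is governed by a single root-stack construction, and then to reassemble the local contributions over the given versal deformation $\Spec(R)$ of $(C,\{\sigma_i\})$. For the localization: over the open substack $\mathcal{C}_{gen}$ the twisted curve $\mathcal{C}$ coincides with its coarse space, and over a neighborhood of a marking $\Sigma_i$ it is an \'etale gerbe over $S$ contained in the smooth locus, carrying no deformations beyond those of the marked point $\sigma_i$; in particular the orders of the stabilizers (at nodes and at markings) are locally constant in families. Thus a deformation of $(\mathcal{C},\{\Sigma_i\})$ is the datum of a deformation of $(C,\{\sigma_i\})$ together with, for each node $q_i$, a choice of balanced twisted structure over the local piece of the deformed coarse curve having stabilizer of the prescribed order $r_i$. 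By the local model recalled above, this amounts to the following: given the family $\Spec(R[x,y]/(xy-t_i))\to\Spec(R)$, classify the balanced twisted curves with this coarse space whose node over $q_i$ has stabilizer $\mu_{r_i}$.

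Such a twisted curve is, \'etale-locally, the $r_i$-th root stack along the two branches of the node,
\[
[\,\Spec(R[x,y,z,w]/(xy-t_i,\ zw-t',\ z^{r_i}-x,\ w^{r_i}-y))/\mu_{r_i}\,],
\]
for some parameter $t'\in R$ recording how the stacky node is smoothed. Eliminating $x=z^{r_i}$ and $y=w^{r_i}$, the relation $xy=t_i$ becomes $(zw)^{r_i}=(t')^{r_i}=t_i$, a relation in $R$; so such a twisted structure exists only when $(t')^{r_i}=t_i$, and then it is the flat (l.c.i.) family $[\Spec(R[z,w]/(zw-t'))/\mu_{r_i}]$, unique up to unique isomorphism over the coarse curve. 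Hence the twisted structures at $q_i$ over $\Spec(R)$ are parametrized precisely by $\Spec(R[z_i]/(z_i^{r_i}-t_i))\to\Spec(R)$, with the variable $z_i$ recording $t'$.

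Since distinct nodes contribute independently, and $\mathcal{C}$ is rigid relative to $C$ away from the nodes and markings, gluing the local root-stack models above (with $t'=z_i$) over the pullback of the versal family $\tilde{C}$ produces a tautological twisted curve over
\[
R':=R[z_1,\dots,z_m]/(z_1^{r_1}-t_1,\dots,z_m^{r_m}-t_m).
\]
To see this family is versal, grant (following \cite{ol}) that $\mathcal{M}^{tw}_{g,n}$ is algebraic, and consider the morphism $\mathcal{M}^{tw}_{g,n}\to\mathcal{S}_{g,n}$ sending a twisted curve to its coarse curve. The preceding steps show exactly that, near $(\mathcal{C},\{\Sigma_i\})$, the fiber product $\mathcal{M}^{tw}_{g,n}\times_{\mathcal{S}_{g,n}}\Spec(R)$ is represented by the scheme $\Spec(R')$; that is, $\Spec(R')\to\mathcal{M}^{tw}_{g,n}$ is the base change along $\mathcal{M}^{tw}_{g,n}\to\mathcal{S}_{g,n}$ of the map $\Spec(R)\to\mathcal{S}_{g,n}$. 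The latter is smooth because $R$ is a versal deformation space for $(C,\{\sigma_i\})$; hence $\Spec(R')\to\mathcal{M}^{tw}_{g,n}$ is smooth, and together with the evident $k$-point of $\Spec(R')$ over $(\mathcal{C},\{\Sigma_i\})$ this is precisely the assertion that $R'$ is a versal deformation space for the twisted curve.

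The main obstacle is the local classification in the second paragraph: showing that a flat family of balanced twisted curves with prescribed coarse space $\Spec(R[x,y]/(xy-t_i))$ and prescribed stabilizer order $r_i$ at the node is \emph{nothing more} than the root-stack model for a \emph{single} parameter $t'$ with $(t')^{r_i}=t_i$ --- with no hidden moduli, with $r_i$ forced to be locally constant, and with no extra automorphisms over the coarse curve. This is where the balancedness hypothesis and the cyclicity and finiteness of the stabilizers are genuinely used, and it is the technical heart of Olsson's treatment; granting it, the localization and the formal smoothness check for versality are routine.
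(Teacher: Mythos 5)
The paper does not prove this proposition; it is stated as a verbatim citation of Olsson's Theorems 1.10 and 1.11 in \cite{ol}, and the author proceeds directly to use it. So there is no in-paper argument to compare yours against, and the appropriate comparison is with Olsson's own proof.

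Your localization strategy --- analyze the twisted structure node by node, recover the relations $z_i^{r_i}-t_i$ from the $r_i$-th root description of the node, then check versality via formal smoothness of the map $\mathcal{M}^{tw}_{g,n}\to\mathcal{S}_{g,n}$ --- does follow the shape of Olsson's argument. But there is a genuine gap in your second paragraph, and it invalidates the representability claim at the end. You assert that the twisted structure over $\Spec(R[x,y]/(xy-t_i))$ with stabilizer $\mu_{r_i}$ at the node is unique up to \emph{unique} isomorphism over the coarse curve once $t'$ with $(t')^{r_i}=t_i$ is fixed, and hence that $\mathcal{M}^{tw}_{g,n}\times_{\mathcal{S}_{g,n}}\Spec(R)$ is represented by the scheme $\Spec(R')$. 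Both statements are false: twisted curves carry nontrivial \emph{ghost automorphisms} over their coarse space. In the local model $[\Spec(R[z,w]/(zw-t'))/\mu_{r_i}]$, the $\mu_{r_i}$-equivariant assignment $z\mapsto z$, $w\mapsto\zeta w$ (for $\zeta\in\mu_{r_i}$) covers the identity of the coarse chart $\Spec(R[x,y]/(xy-t_i))$ and carries $t'$ to $\zeta t'$. So the isomorphism class of the twisted structure depends on $t'$ only up to the $\mu_{r_i}$-action, and the twisted curve has a $\mu_{r_i}$ worth of automorphisms over its coarse curve at each node (and similarly a $\mu_{a_j}$ at each gerbe marking of order $a_j$). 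Consequently the fiber product is, near the chosen point, the quotient stack $[\Spec(R')/G]$ with $G$ the finite group of ghost automorphisms --- not $\Spec(R')$ itself.

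The conclusion survives, but the last step needs one more line: $\Spec(R')\to[\Spec(R')/G]$ is an \'etale atlas ($G$ being finite of order invertible, by tameness), and $[\Spec(R')/G]\to\mathcal{M}^{tw}_{g,n}$ is smooth as the base change of the smooth versal map $\Spec(R)\to\mathcal{S}_{g,n}$; hence the composite $\Spec(R')\to\mathcal{M}^{tw}_{g,n}$ is smooth, which is exactly what versality requires. With that correction, your outline is a faithful reconstruction of the cited result, and the localization, the identification of the root-stack local model, and the reduction of versality to smoothness of $\Spec(R)\to\mathcal{S}_{g,n}$ are all sound.
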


 In our later discussion, we will be interested in the smooth objects of the versal family. From this proposition, we know that such objects always exist. \\ 

 Now we consider a proper tame Deligne-Mumford stack $\mathcal{M}$ admitting a projective coarse moduli scheme $M$. We fix an ample line bundle on $M$. To compactify the moduli space of $n$-pointed stable maps from curves to a Deligne-Mumford stack, Abramovich and Vistoli made the important observation that we have to enlarge the source from the category of curves to the category of twisted curves. We remark that the collection of twisted curves over a scheme $S$ form a 2-category which is in fact equivalent to a 1-category (cf. \cite{av}, 4.4.2).

\begin{defn}
{\it A twisted stable $n$-pointed map of genus $g$ and degree $d$ over $S$}
$(\mathcal{C} \to S,  f : \mathcal{C} \to \mathcal{M},\Sigma^{\mathcal{C}}_i \subset \mathcal{C})$
consists of a commutative diagram
$$\xymatrix{
\mathcal{C} \ar@{->}^f[r]\ar[d] & \mathcal{M}\ar[d] \\
C           \ar@{->}^{\overline{f}}[r]\ar[d]     & M               \\
S
}$$
along with $n$ closed substacks $\Sigma^{\mathcal{C}}_i\subset \mathcal{C}$, satisfying:
\begin{enumerate}
 \item  $\mathcal{C} \to C \to S$ along with $\Sigma^{\mathcal{C}}_i$
is a twisted nodal $n$-pointed curve over $S$;
\item the morphism $f: \mathcal{C} \to \mathcal{M}$ is representable; and
\item The induced structure on the coarse moduli spaces $$(C \to S,\overline{f} : C \to M,\sigma_i)$$ yields an (untwisted) stable $n$-pointed map of degree $d$.
\end{enumerate}
\end{defn}

\begin{thm}[\cite{av}, 1.4.1] Let $\overline{\mathcal{M}}_{g,n}(\mathcal{M},d)$ be fibered over $\Sch/S$, the category of the twisted stable $n$-pointed maps $\mathcal{C} \to \mathcal{M}$ of genus $g$ and degree $d$.
\begin{enumerate}
 \item The category $\overline{\mathcal{M}}_{g,n}(\mathcal{M},d)$ is a proper Deligne-Mumford stack.
\item The coarse moduli space $\overline{M}_{g,n}(\mathcal{M},d)$ of $\overline{\mathcal{M}}_{g,n}(\mathcal{M},d)$ is projective.
\item There is a commutative diagram
$$\xymatrix{
\overline{\mathcal{M}}_{g,n}(\mathcal{M},d) \ar@{->}[r]\ar[d] & \overline{\mathcal{M}}_{g,n}(M,d) \ar[d] \\
\overline{M}_{g,n}(\mathcal{M},d)         \ar@{->}[r]     & \overline{M}_{g,n}(M,d),
}$$
where the top arrow is proper, quasi-finite and relatively of Deligne-Mumford type, and the bottom
arrow is finite. 
\end{enumerate}

\end{thm}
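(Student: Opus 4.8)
The plan is to deduce the statement from three external inputs: the established theory of (untwisted) stable maps, encapsulated in the properness and projectivity of $\overline{\mathcal{M}}_{g,n}(M,d)$; Olsson's description of the deformations of a twisted curve, quoted in Proposition \ref{versal}; and the Keel--Mori theorem producing coarse moduli spaces of proper Deligne--Mumford stacks. Throughout, write $\Phi\colon \overline{\mathcal{M}}_{g,n}(\mathcal{M},d)\to \overline{\mathcal{M}}_{g,n}(M,d)$ for the transformation sending a twisted stable map $(\mathcal{C}\to\mathcal{M})$ to the induced stable map $(C\to M)$ on coarse spaces; all three conclusions will be read off from properties of $\Phi$.

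First I would check that $\overline{\mathcal{M}}_{g,n}(\mathcal{M},d)$ is a Deligne--Mumford stack of finite type over $S$. It is a stack for the fppf topology by descent for twisted curves and for representable morphisms. To see that the diagonal is unramified: an automorphism of a twisted stable map restricts to an automorphism of the underlying coarse stable map, which is finite by stability, while the residual automorphisms concentrated at the nodes and markings lie in the finite cyclic bands of the twisted structure and are further cut down by representability of $f$; infinitesimally, the deformation--obstruction theory of $(\mathcal{C},f)$ combines the classical one for maps (now relative to $\mathcal{M}$, twisted along $\mathcal{C}$) with Olsson's versal family, whose local model $R[z_i]/(z_i^{r_i}-t_i)$ is recorded in Proposition \ref{versal}, and putting these together produces a versal deformation of the whole object, hence algebraicity. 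Finite type is a boundedness assertion: the coarse datum $(C\to M)$ lives in the finite-type stack $\overline{\mathcal{M}}_{g,n}(M,d)$, the numbers of nodes and markings are bounded by $g$ and $n$, and the stabilizer orders $r_i$ are bounded because $\mathcal{M}$ is a \emph{fixed} Deligne--Mumford stack (so has bounded stabilizers) and $f$ is representable. Consequently $\Phi$ is of finite type, and in fact quasi-finite, since over a fixed coarse stable map the twisted structures form a finite set: finitely many choices of the $r_i$, each an explicit root stack, with finitely many representable lifts of the coarse map.

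The substantive step is properness, which I would prove through the valuative criterion over a discrete valuation ring $R$ with fraction field $K$ and valuation $v$: a twisted stable map over $\Spec K$ must extend, after a finite extension of $R$, to one over $\Spec R$, uniquely up to unique isomorphism. For existence one first extends the coarse stable map $C_K\to M$ to a stable map $C_R\to M$ using properness of $\overline{\mathcal{M}}_{g,n}(M,d)$ (possibly after a finite base change). One then rebuilds the twisted structure on $C_R$: over the markings it propagates from the given \'etale gerbes, while at each node $q_j$ that is new in the central fibre, with local equation $xy=t$ and $v(t)=e_j$, one installs a balanced twisted structure by passing to the $r_j$-th root stack of Proposition \ref{versal}, possibly after a further base change of $R$ ramified of order $r_j$. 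The order $r_j$ must be chosen so that the resulting twisted curve $\mathcal{C}_R$ carries a \emph{representable} morphism to $\mathcal{M}$ lifting $C_R\to M$ and restricting to the given $\mathcal{C}_K\to\mathcal{M}$; working \'etale-locally, where $\mathcal{M}\simeq [V/G]$ with $G$ finite, representability forces $r_j$ to divide the order of the pertinent stabilizer of $\mathcal{M}$ along the central fibre, so a valid $r_j$ exists and is bounded. Uniqueness follows from separatedness of $\overline{\mathcal{M}}_{g,n}(M,d)$ together with the rigidity of the construction: representability at the central fibre leaves no latitude in the $r_j$, and an $r_j$-th root stack is canonical. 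I expect this "twisted stable reduction" --- propagating the stacky structure of the source across the new nodes while preserving representability of $f$, with the twisting under control --- to be the principal difficulty.

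It remains to deduce the coarse space and the diagram. By Keel--Mori the proper Deligne--Mumford stack $\overline{\mathcal{M}}_{g,n}(\mathcal{M},d)$ admits a coarse moduli space $\overline{M}_{g,n}(\mathcal{M},d)$, proper over $S$. The transformation $\Phi$ is relatively of Deligne--Mumford type --- its fibres parametrize twisted structures over a fixed coarse stable map and so are themselves Deligne--Mumford stacks --- it is proper by the valuative criterion above, and quasi-finite by the boundedness of the first step; this is the claimed top arrow. Taking coarse spaces, $\Phi$ induces a morphism of schemes $\overline{M}_{g,n}(\mathcal{M},d)\to \overline{M}_{g,n}(M,d)$ that is proper and quasi-finite, hence finite; as $\overline{M}_{g,n}(M,d)$ is projective in the classical case, pulling back a relatively ample line bundle along this finite morphism exhibits $\overline{M}_{g,n}(\mathcal{M},d)$ as projective, which completes the plan.
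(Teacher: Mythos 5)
The paper does not prove this statement: Theorem~2.5 is quoted verbatim from Abramovich--Vistoli (\cite{av}, Theorem~1.4.1) as a black box, so there is no ``paper's own proof'' to compare against. You have instead reconstructed the outline of Abramovich and Vistoli's original argument, and your sketch matches their actual strategy reasonably well: establish that the moduli problem is an algebraic Deligne--Mumford stack of finite type by bounding the twisted structures over the untwisted moduli space, prove properness by a valuative criterion (``twisted stable reduction''), invoke Keel--Mori for the coarse space, and deduce projectivity by finiteness over the projective coarse space of classical stable maps.

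The one place where your sketch elides the genuine difficulty is the existence half of twisted stable reduction. You assert that after extending the coarse stable map $C_K \to M$ to $C_R \to M$, one can ``install'' a balanced twisted structure at each new node with $r_j$ chosen so that a \emph{representable} lift $\mathcal{C}_R\to\mathcal{M}$ exists, and that ``a valid $r_j$ exists.'' That existence is precisely the crux of Abramovich--Vistoli's argument, and it is not automatic from the local description $[V/G]$. It rests on a purity lemma for maps to Deligne--Mumford stacks (extension of a map from a smooth surface over a codimension-two locus once the coarse map extends, after passing to an appropriate root stack), together with a nontrivial analysis identifying the minimal $r_j$ that makes the lift both exist and be representable. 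Your phrase ``representability forces $r_j$ to divide the order of the pertinent stabilizer'' gives the \emph{upper bound} but not the \emph{construction}. As written this is a gap in the existence direction, although the overall architecture of your argument is the correct one.
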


In our paper, we also need to study morphisms between stacks and their deformation theory, which has been investigated in \cite{ol2} and \cite{ol3}. From Olsson's results, we can see that when the stacks are Deligne-Mumford, the theory is similar to the case of schemes. We summarize all the results which we will need.
\begin{thm}\label{rep}
 Let $\mathcal{C}/S$ be a twisted curve and $\mathcal{Y}/S$ a Deligne-Mumford stack.
  $\underline{\Hom}_S(\mathcal{C},\mathcal{Y})$ is represented by a Deligne-Mumford stack. The fibered subcategory of representable morphisms
$$\underline{\Hom}^{rep}_S(\mathcal{C},\mathcal{Y}) \subset \underline{\Hom}_S(\mathcal{C},\mathcal{Y})$$
is an open substack.
\item
\end{thm}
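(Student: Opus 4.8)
The plan is to derive both assertions from Olsson's theory of Hom-stacks and of deformations of morphisms of algebraic stacks (\cite{ol2}, \cite{ol3}); the statement is in essence a summary of those results, so the work consists of checking that their hypotheses are met for a twisted curve $\mathcal{C}$ and a Deligne--Mumford target $\mathcal{Y}$. I would use two structural facts: first, a twisted curve $\mathcal{C}\to S$ is proper, flat and of finite presentation, with tame (cyclic) stabilizers, hence cohomologically flat in dimension zero (\cite{av}); second, a Deligne--Mumford morphism $\mathcal{Y}\to S$ has representable, quasi-compact, separated diagonal, and its cotangent complex $L_{\mathcal{Y}/S}$ is concentrated in non-positive cohomological degrees --- the precise feature in which it differs from a genuinely Artin stack. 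For brevity write $H:=\underline{\Hom}_S(\mathcal{C},\mathcal{Y})$.

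First I would apply Olsson's representability theorem for Hom-stacks: because $\mathcal{C}\to S$ is proper, flat and finitely presented and $\mathcal{Y}\to S$ is locally of finite presentation with quasi-compact separated diagonal, $H$ is an algebraic stack, locally of finite presentation over $S$, with representable, quasi-compact, separated diagonal, and separated over $S$. To see that $H$ is Deligne--Mumford it then suffices to show its diagonal is unramified, equivalently that a $1$-morphism $f\colon\mathcal{C}_T\to\mathcal{Y}_T$ (over a test scheme $T/S$) admits no nonzero infinitesimal $2$-automorphisms. By Olsson's deformation theory for morphisms of stacks the infinitesimal $2$-automorphisms of $f$ are classified by $\Ext^{-1}_{\mathcal{O}_{\mathcal{C}_T}}(Lf^{*}L_{\mathcal{Y}_T/T},\mathcal{O}_{\mathcal{C}_T})$; since $L_{\mathcal{Y}_T/T}$, hence also $Lf^{*}L_{\mathcal{Y}_T/T}$, lies in cohomological degrees $\le 0$, every $\Ext^{i}(Lf^{*}L_{\mathcal{Y}_T/T},\mathcal{O}_{\mathcal{C}_T})$ with $i<0$ vanishes. (The tameness of $\mathcal{C}_T$ is what guarantees that this coherent-cohomology computation on the stack agrees with the familiar one on its nodal coarse space, so that the argument is literally the classical one.) Hence $H$ is a Deligne--Mumford stack, locally of finite presentation over $S$.

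For the openness of $\underline{\Hom}^{rep}_S(\mathcal{C},\mathcal{Y})$ I would use the inertia criterion for representability: a morphism $f\colon\mathcal{C}_T\to\mathcal{Y}_T$ of Deligne--Mumford stacks is representable if and only if the relative inertia $\ker\!\big(I_{\mathcal{C}_T}\to f^{*}I_{\mathcal{Y}_T}\big)\to\mathcal{C}_T$ is trivial, i.e. meets each geometric fibre of $\mathcal{C}_T$ only in the identity. Applying this to the universal morphism $F\colon\mathcal{C}\times_S H\to\mathcal{Y}$, set $\mathcal{I}:=\ker\!\big(I_{\mathcal{C}\times_S H}\to F^{*}I_{\mathcal{Y}}\big)$, a separated, unramified, quasi-finite group scheme over $\mathcal{C}\times_S H$; it is in fact finite over $\mathcal{C}\times_S H$, since $\mathcal{C}$, $\mathcal{Y}$ and $H$ are separated (the separatedness of $H$ coming from the previous step). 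The unit section $e$ of $\mathcal{I}$ is then both an open immersion (as $\mathcal{I}\to\mathcal{C}\times_S H$ is unramified) and a closed immersion (as it is a section of a separated morphism), so $\mathcal{Z}:=\mathcal{I}\setminus e$ is a closed subscheme of $\mathcal{I}$. The composite $\mathcal{Z}\to\mathcal{C}\times_S H\to H$ is proper, being a closed immersion into $\mathcal{I}$, then the finite map $\mathcal{I}\to\mathcal{C}\times_S H$, followed by the base change $\mathcal{C}\times_S H\to H$ of the proper morphism $\mathcal{C}\to S$; hence the image of $\mathcal{Z}$ in $H$ is closed, and by the criterion it is exactly the locus over which $F$ fails to be representable. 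Its complement is therefore an open substack of $H$, and by construction it represents $\underline{\Hom}^{rep}_S(\mathcal{C},\mathcal{Y})$. (Alternatively, the openness of the representable locus is established directly by Olsson in \cite{ol2}, which I would simply cite.)

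The only genuine obstacle is bookkeeping rather than any single argument: one must confirm that $\mathcal{C}\to S$ is tame and cohomologically flat so that Olsson's representability theorem applies and the stacky coherent-cohomology computations reduce to the classical ones --- automatic here, the twisted curves in play having cyclic stabilizers over a characteristic-zero base --- and one must carry enough separatedness through $H$ to make the relative inertia $\mathcal{I}$ finite, hence proper over $H$, in the openness step. Granting these, everything is formal given the cited results of Abramovich--Vistoli and Olsson.
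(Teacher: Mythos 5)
Your proposal is correct and follows essentially the same route as the paper, which simply defers to Olsson's treatment (\cite{ol}, 1.15, resting on \cite{ol2} and \cite{ol3}): representability of the Hom-stack from Olsson's theorem, Deligne--Mumfordness via vanishing of $\Ext^{-1}(Lf^*L_{\mathcal{Y}/S},-)$, and openness of the representable locus via the relative inertia. You have merely unpacked the details that the cited reference supplies.
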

\begin{proof}
See the proof of (\cite{ol}, 1.15).
\end{proof}

\begin{thm}\label{dfmthy}
Let the notations be as \eqref{rep}. Choose $S=\Spec(A)$,
where $A$ is a ring.  Assume
$a:\mathcal{C}/A\to \mathcal{Y}/A$ a representable morphism.  Denote
$L_{\mathcal{Y}/A}$ as the relative cotangent complex. If
$\gamma:\Spec(A)\to \Spec(A')$ is a closed immersion defined by a
square-zero ideal $M\subset A'$, and $i:\mathcal{C}/A\to
\mathcal{C'}/A'$ is a flat extension with an isomorphism
$\mathcal{C'}\times_{A'} A \cong \mathcal{C}$ and $j:\mathcal{Y}'/A'\to \mathcal{Y}/A$ a flat extension with $\mathcal{Y}'\times_{A} A'\cong \mathcal{Y}$, then
\begin{enumerate}
 \item There is a canonical obstruction class $o(a,i,j)\in \Ext^1(La^*L_{\mathcal{Y}/A}, M)$ whose vanishing is necessary and sufficient for the existence of an arrow $a':\mathcal{C'}\to \mathcal{Y}$ extending $a:\mathcal{C}\to \mathcal{Y}$.
\item If $o(a,i,j)=0$, then the set of isomorphism classes of maps $a':\mathcal{C'}\to \mathcal{Y}$ as in (1) is naturally a torsor under $\Ext^0(La^*L_{\mathcal{Y}/A},M)$.
\end{enumerate}

\end{thm}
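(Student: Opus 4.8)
\noindent\textbf{Proof proposal for Theorem \textup{\ref{dfmthy}}.}
The plan is to deduce the statement from Olsson's deformation theory for representable morphisms of Deligne--Mumford stacks (\cite{ol2}, \cite{ol3}), which is the stack-theoretic incarnation of Illusie's cotangent complex formalism. The point to keep in mind is that the twisted-curve hypothesis on $\mathcal{C}$ plays no special role here: all that is used is that $\mathcal{C}/A$ is a flat Deligne--Mumford stack, so none of the fine structure of twisted curves enters. First I would fix notation for the cotangent complex: since $\mathcal{Y}/A$ is Deligne--Mumford, $L_{\mathcal{Y}/A}$ is a complex of $\mathcal{O}_{\mathcal{Y}}$-modules, bounded above with coherent cohomology, and $La^{*}L_{\mathcal{Y}/A}$ is the analogous complex on $\mathcal{C}$; because $\mathcal{C}/A$ is flat, the square-zero ideal $M$ gives the $\mathcal{O}_{\mathcal{C}}$-module $\mathcal{O}_{\mathcal{C}}\otimes_{A}M$, which I abbreviate to $M$, and the groups $\Ext^{i}(La^{*}L_{\mathcal{Y}/A},M)$ are to be read as the hyper-$\Ext$ groups $\Ext^{i}_{\mathcal{O}_{\mathcal{C}}}(La^{*}L_{\mathcal{Y}/A},\mathcal{O}_{\mathcal{C}}\otimes_{A}M)$ computed on the lisse-\'etale site of $\mathcal{C}$.

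The next step is the classical lifting argument carried out in this setting. Because $a$ is representable, after passing to \'etale atlases of $\mathcal{Y}$ and then of $\mathcal{C}$ the problem becomes the familiar one of extending a morphism of algebraic spaces across a square-zero thickening, with prescribed flat extensions $\mathcal{C}'$ and $\mathcal{Y}'$ of source and target. For that problem the transitivity triangle $La^{*}L_{\mathcal{Y}/A}\to L_{\mathcal{C}/A}\to L_{\mathcal{C}/\mathcal{Y}}\xrightarrow{+1}$ shows that local lifts exist, that the sheaf of such local lifts is (where nonempty) a torsor under the relevant degree-zero $\mathcal{H}om$-sheaf, and that two such patch up to a degree-one obstruction. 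Assembling the local data via the local-to-global spectral sequence produces one global class $o(a,i,j)\in\Ext^{1}(La^{*}L_{\mathcal{Y}/A},M)$ whose vanishing is necessary and sufficient for the existence of a global extension $a'$, and, when it vanishes, the set of isomorphism classes of such $a'$ is a torsor under $\Ext^{0}(La^{*}L_{\mathcal{Y}/A},M)$. One verifies in passing that the target extension $\mathcal{Y}'$ affects only the \emph{value} of $o(a,i,j)$ and not the groups in which it lives: since $M^{2}=0$, derived tensoring with $M$ collapses $La^{*}L_{\mathcal{Y}'/A'}$ to $La^{*}L_{\mathcal{Y}/A}$.

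The main obstacle is not a computation but a careful matching of hypotheses: one must check that Olsson's framework applies, i.e.\ that $a$ is representable (this is exactly the content of restricting to $\underline{\Hom}^{rep}$ in Theorem \ref{rep}), that the extensions $i$ and $j$ are flat as assumed, and — since everything is Deligne--Mumford in characteristic zero — that tameness is automatic and imposes no extra condition. One must also be careful that the $\Ext$-groups are formed on $\mathcal{C}$ with the correct coefficient sheaf $\mathcal{O}_{\mathcal{C}}\otimes_{A}M$, so that the descent of the obstruction and torsor structure from the atlas back to $\mathcal{C}$ is legitimate. Granting these verifications, statements (1) and (2) are precisely the conclusions of \cite{ol2} (with \cite{ol3} supplying the description of the cotangent complex of the representable morphism $a$), applied with the twisted curve $\mathcal{C}$ as source.
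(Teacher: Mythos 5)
Your proposal is correct and takes essentially the same route as the paper: both reduce the statement to Olsson's deformation theory for representable morphisms, and the paper's actual proof is a one-line citation to \cite{ol2}, Theorem 1.5(i) and (ii), with $Z=\Spec(A)$ and $Z'=\Spec(A')$. Your additional remarks (irrelevance of the twisted-curve structure, descent along atlases, transitivity triangle, the identification of coefficients as $\mathcal{O}_{\mathcal{C}}\otimes_A M$) are an accurate unpacking of what is inside that citation, not a different argument.
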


\begin{proof}
It follows from (\cite{ol2},  1.5 (i) and (ii)), in which we choose  $Z= {\rm Spec}(A)$ and $Z'={\rm Spec}(A')$. 
\end{proof}

\begin{rmk}\label{obstruction}
Theorem (\ref{dfmthy}) can be used to define an obstruction theory in the sense
of (\cite{ar}, 2.6) for the stack $\underline{\Hom}^{rep}_S(\mathcal{C},\mathcal{Y}) $. Recall that such a theory consists of the data (in the following all rings are over $S$):

\begin{enumerate}
 \item  For every surjection of noetherian rings $A \to A_0$ with nilpotent kernel
and $a \in \underline{\Hom}^{rep}_S(\mathcal{C},\mathcal{Y})(A)$, a functor
$$O_a : (A_0 \mbox{-}{\it modules\  of \ finite\  type}) \to (A_0\mbox{-}{\it modules\  of\  finite\  type}).$$

\item For each surjection $A' \to A$ with kernel $M$, an $A_0$-module of finite
type, a class $o_a \in O_a(M)$ which is zero if and only if there exists a lifting of $a$ to
$A'$.
\end{enumerate}

This data is further required to be functorial and linear in $(A_0, M)$. Denote by $\mathcal{Y}^A$ (resp. $\mathcal{C}^A$) the fiber product $\mathcal{Y}\times_S {\rm Spec}(A)$ (resp. $\mathcal{C}\times_S {\rm Spec }(A)$). Now for the morphism
$$a \in \underline{\Hom}^{rep}_S(\mathcal{C},\mathcal{Y})(A)\cong {\Hom}^{rep}_A(\mathcal{C}^A,\mathcal{Y}^A),$$ 
we apply \eqref{dfmthy} to it with $\mathcal{C}'=\mathcal{C}\times_S {\rm Spec}(A')$ and $\mathcal{Y}'=\mathcal{Y}\times_S {\rm Spec}(A')$.
We obtain an obstruction theory  by defining 
$$O_a(M):=\Ext^1(La^*L_{\mathcal{Y}^A/{A}},M),$$ and
taking for each $A' \to A$ as in (\ref{obstruction}.2) inducing $i:\mathcal{C}^A\to \mathcal{C'}$ and $j:\mathcal{Y}^A\to \mathcal{Y}'$, the class 
$$o(a,i,j) \in \Ext^1(La^*L_{\mathcal{Y}^A/A}, M)$$ to be the
class obtained from (\ref{dfmthy}.1). Note that it follows from the construction of the
cotangent complex  that the homology groups of $La^*L_{\mathcal{Y}^A/A}$ are
coherent. From this and standard properties of cohomology it follows that the $A_0$-modules $\Ext^1(La^*L_{\mathcal{Y}^A/A}, M)$ are of finite type and that the additional conditions
(\cite{ar}, 4.1) on the obstruction theory hold. See (\cite{ol2}, 1.7) for a similar discussion.
\end{rmk}

A corollary to the above discussion is the following analogy of (\cite{kollarrc},  I.2.17):
\begin{corollary}\label{dimension}
Let $\mathcal{F}:\mathcal{C}/S\to \mathcal{Y}/S$ be a representable
proper morphism from a twisted curve to a smooth Deligne-Mumford
stack $\mathcal{Y}/S$. Let $F$ be a field and $s: \Spec F \to S$ a
morphism. Let $\mathcal{C}^F$ and $\mathcal{Y}^F$ be the fibers over
$\Spec(F)$ and $f:\mathcal{C}^F\to \mathcal{Y}^F$ the morphism
induced by $\mathcal{F}$. Assume that $S$ is equi-dimensional at $s$.
Then the dimension of every irreducible component of
$\underline{\Hom}_S(\mathcal{C}, \mathcal{Y})$  at $[f]$ is at least
$$H^0(\mathcal{C},f^*T_{\mathcal{Y}^{F}})-H^1(\mathcal{C},f^*T_{\mathcal{Y}^{F}})+\dim_sS.$$
Furthermore, if $H^1(\mathcal{C},f^*T_{\mathcal{Y}^{F}})=0$, then $\underline{\Hom}_S(\mathcal{C},\mathcal{Y})$ is smooth over $S$ at $[f]$.
\end{corollary}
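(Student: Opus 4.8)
The plan is to deduce Corollary \ref{dimension} from the obstruction theory recorded in Remark \ref{obstruction}, following the template of \cite{kollarrc}, I.2.17, but carried out in the category of Deligne-Mumford stacks rather than schemes. First I would reduce the statement to a local computation: since $\underline{\Hom}_S(\mathcal{C},\mathcal{Y})$ is represented by a Deligne-Mumford stack (Theorem \ref{rep}), estimating the dimension of a component at a point $[f]$ amounts to estimating the dimension of its versal deformation ring at $[f]$. Working over $S=\Spec(A)$ with $A$ a suitable local ring in which $s$ sits, and taking $A_0=F$ together with the square-zero extensions $A'\to A$ appearing in Remark \ref{obstruction}, the obstruction theory with values in $O_a(M)=\Ext^1(La^*L_{\mathcal{Y}^A/A},M)$ together with the torsor description in Theorem \ref{dfmthy}(2) shows that the deformations of $f$ over $S$ are ``cut out'' by at most $\dim_F\Ext^1(\mathcal{C},f^*L_{\mathcal{Y}^F/F}^\vee)$ equations inside a smooth parameter space of relative dimension $\dim_F\Ext^0(\mathcal{C},f^*L_{\mathcal{Y}^F/F}^\vee)$ over $S$.

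The next step is to translate the $\Ext$-groups of the cotangent complex into ordinary sheaf cohomology of the pulled-back tangent bundle. Since $\mathcal{Y}/S$ is smooth, its relative cotangent complex $L_{\mathcal{Y}^F/F}$ is quasi-isomorphic to the locally free sheaf $\Omega^1_{\mathcal{Y}^F/F}$ placed in degree zero; because $f:\mathcal{C}^F\to\mathcal{Y}^F$ is representable and $\mathcal{C}$ is a (reduced, nodal) twisted curve, $Lf^*$ is just $f^*$ on this locally free sheaf, so $La^*L_{\mathcal{Y}^F/F}\simeq f^*\Omega^1_{\mathcal{Y}^F/F}$ in degree zero. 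Hence $\Ext^i(La^*L_{\mathcal{Y}^F/F},\mathcal{O}_{\mathcal{C}})\cong H^i(\mathcal{C},f^*T_{\mathcal{Y}^F})$ for $i=0,1$, and $\Ext^i$ vanishes for $i\neq 0,1$ since $\mathcal{C}$ is a curve (cohomological dimension one, using that $\mathcal{C}$ is a tame Deligne-Mumford stack with coarse space a nodal curve, so $H^i$ of coherent sheaves vanishes for $i\geq 2$). Feeding this into the previous paragraph gives that every component of $\underline{\Hom}_S(\mathcal{C},\mathcal{Y})$ through $[f]$ has dimension at least $H^0(\mathcal{C},f^*T_{\mathcal{Y}^F})-H^1(\mathcal{C},f^*T_{\mathcal{Y}^F})+\dim_s S$, by the standard fact that a scheme (or DM stack) whose deformation functor has tangent space of dimension $h^0$ and obstruction space of dimension $h^1$ has every component of dimension $\geq h^0-h^1$ at the point in question; adding the $\dim_s S$ accounts for deformations of $s$ in $S$, using the equidimensionality hypothesis.

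For the last assertion, if $H^1(\mathcal{C},f^*T_{\mathcal{Y}^F})=0$ then the obstruction group $O_a(M)$ vanishes for every $M$ in a neighborhood of $[f]$ (by semicontinuity of $h^1$, which holds here because $f^*T_{\mathcal{Y}}$ is flat over the base and $\mathcal{C}/S$ is proper flat), so the obstruction theory of Remark \ref{obstruction} is unobstructed near $[f]$; then Artin's criterion, or directly the fact that a functor with vanishing obstructions and finite-dimensional tangent space is formally smooth, shows $\underline{\Hom}_S(\mathcal{C},\mathcal{Y})$ is smooth over $S$ at $[f]$.

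I expect the main obstacle to be bookkeeping rather than mathematical depth: one must be careful that the obstruction theory of Remark \ref{obstruction} really controls the dimension of the $\Hom$-stack over $S$ (not merely over $F$), which is why the $\dim_s S$ term and the equidimensionality hypothesis enter, and one must justify the identification $La^*L_{\mathcal{Y}^F/F}\simeq f^*\Omega^1_{\mathcal{Y}^F/F}$ and the vanishing of higher $\Ext$ in the stacky setting, i.e. that cohomology of coherent sheaves on a tame twisted curve behaves exactly as on its coarse nodal curve. Once these stack-theoretic inputs are in place, the estimate is formally identical to \cite{kollarrc}, I.2.17, so I would keep the proof short and simply point to Olsson's deformation theory (\cite{ol2}, 1.5 and 1.7) and to the classical argument for the numerics.
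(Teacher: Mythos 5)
Your proposal is correct and follows essentially the same route as the paper's proof: both deduce the estimate from the obstruction theory of Remark~\ref{obstruction}, identifying the $\Ext$-groups of $La^*L_{\mathcal{Y}/A}$ with $H^i(\mathcal{C},f^*T_{\mathcal{Y}^F})$ (using that $\mathcal{Y}/S$ is smooth so the cotangent complex is concentrated in degree zero), and then run the standard argument of \cite{kollarrc}, I.2.17 to conclude that $\underline{\Hom}_S(\mathcal{C},\mathcal{Y})$ is, near $[f]$, cut out by at most $h^1$ equations inside a space smooth of relative dimension $h^0$ over $S$. The only difference is that the paper writes out the local-ring bookkeeping (the rings $R$, $Q$, the kernel $K$, the quotient $J^m$ and the splitting argument showing $J^m=0$) explicitly, whereas you invoke the resulting estimate as a known consequence of having a tangent/obstruction theory with the given dimensions; mathematically this is the same content.
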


\begin{proof}
 Because of (\ref{dfmthy}.2), we know that the tangent space of $\underline{\Hom}_s(\mathcal{C}^F,\mathcal{Y}^F)$ (the fiber of $\underline{\Hom}_S(\mathcal{C},\mathcal{Y})$ at $s$) is isomorphic to  $H^0(\mathcal{C},f^*T_{\mathcal{Y}^F})$. Let $(Q,m_Q)$ be the local ring of a point of an \'etale altas of $\underline{\Hom}_S(\mathcal{C},\mathcal{Y})$, whose image is $[f]$. It is a quotient of a local $\mathcal{O}_{s,S}$-algebra  $R$ with the maximal ideal $m_R$, which is smooth over $S$ of relative dimension $h^0(\mathcal{C},f^*T_{\mathcal{Y}^F})$. Let $K$ be the kernel. Then $K\subset (m_sR,m_R^2)$, where $m_s$ is the maximal ideal of $\mathcal{O}_{s,S}$, because $Q$ and $R$ have the same relative tangent space.

Let $B=R/m_RK$ and $m_B$ be the corresponding maximal
ideal. The kernel $J$ of $B\to Q$ is isomorphic to $K/m_RK$. Because
of (\ref{obstruction}), we know that $J$ has a largest quotient $J \to
J^m$ such that the image of $$o\in
\Ext^1(f^*\Omega_{\mathcal{Y}^F},J)\cong
H^1(\mathcal{C},f^*T_{\mathcal{Y}^F})\otimes J$$ \noindent in
$H^1(\mathcal{C},f^*T_{\mathcal{Y}^F})\otimes J^m$ is 0.  Since
the obstruction theory is linear, we have $$\dim J^m \ge \dim
J-h^1(\mathcal{C},f^*T_{\mathcal{Y}^F}).$$

 On the other hand, if we consider the exact sequence $$0\to I^m \to J \to J^m\to 0,$$
where $I^m$ is defined to be the kernel. Thus it follows from the definition of $J^m$ that there is a split of the surjection $B/I^m\to B/J\cong Q$, which then implies $J^m=0$ (cf. \cite{kollarrc}, I.2.10.6). Thus
 $$\mbox{number of generators of } K=\dim K/m_R K=\dim J\le h^1(\mathcal{C},f^*T_{\mathcal{Y}^F})$$

\end{proof}

\section{Smoothing}
 Henceforth, we focus on the $n$-pointed twisted stable maps of genus 0, i.e., objects in $\overline{\mathcal{M}}_{0,n}(\mathcal{M},d)$. In the classical case, to solve the problems relating to rational curves, one of the main tools is the comb construction and its deformations (cf. \cite{kmm}, \cite{kollarrc} and \cite{ghs}). Here we generalize this construction to $n$-pointed twisted stable maps, and also to the case that the handle is reducible. As mentioned in  Section 1, the key point is that to eliminate the local obstructions, we have to add teeth and deform the entire degenerated twisted curve (instead of deforming only one component as in \cite{kmm}).

\begin{defn}\label{comb}
{\it A comb with $m$ teeth} is a twisted nodal curve  $\mathcal{T}$ over $k$ with the following properties:
\begin{enumerate}
 \item $\mathcal{T}=\mathcal{D} \cup_{1\le i\le m} C^i$, where each $C^i$ is isomorphic to $\P^1$;
\item Every $C^i$ meets $\mathcal{D}$ at a single point $x_i$ which is in the smooth scheme locus of $\mathcal{D}$, and $x_i\neq x_j$ for $i\neq j$; 
\item $C^i \cap C^j =\emptyset$.

\end{enumerate}

We call $\mathcal{D}$ the {\it handle} of $\mathcal{T}$ and $C^i$ the {\it teeth} of $\mathcal{T}$.

\end{defn}
\begin{Construction}\label{construction}
 Let $( f:\mathcal{C}\to \mathcal{M}, \Sigma_i\subset \mathcal{C})$ be an $n$-pointed genus 0 twisted stable map of degree $d(>0)$ over $k$. Now we assume
\begin{enumerate}
\item $\mathcal{M}$ is a smooth Deligne-Mumford stack;
\item there is a strongly rationally connected dense open subscheme $M^0\subset \mathcal{M}$;
\item if $\mathcal{D}_j$ is an irreducible component of $\mathcal{C}$ such that $f(\mathcal{D}_j)$ is not a point in $\mathcal{M}$, then $f(\mathcal{D}_j) \cap M^0\not=\emptyset$. 
\end{enumerate}

Let $\mathcal{D}_1$, $\mathcal{D}_2$,..., $\mathcal{D}_s$ be all the components of $\mathcal{C}$, which are not mapped to $\mathcal{M}$ constantly. Because of assumption (3), on each $\mathcal{D}_j$, for any $m_j\in \mathbb{N}$, we can choose $m_j$ general (scheme) points $p^j_1$,..., $p^j_{m_j}$ mapped into $M^0$.

By assumption (2), we can assume that there exist $m=\sum_{j=1}^s m_j$  $1$-pointed very free curves
$$(g^j_k:\P^1\to M^0, r^j_k),\qquad\mbox{such that}\qquad  g^j_k(r^j_k)=f( p^j_k), \mbox{ for } 1\le j\le s,1\le k\le m_j.$$

Let $W^j_k\subset p^j_k\times \A^m$ be defined by the equation $y^j_k=0$ where the coordinates of $\A^m$ are $(y_1^1,...,y^1_{m_1},...,y^s_{m_s})$. Then $W^j_k$'s are disjoint codimension two substacks  of $\mathcal{C}\times \A^m$, which are indeed schemes. Let $h: \mathcal{S}\to\mathcal{C}\times \A^m\to  \A^m$ be obtained by blowing up all $W^j_k$. Gluing the morphisms $f$ and $g^j_k$, we get a morphism $F:\mathcal{S}_0=h^{-1}(\{0\}) \to \mathcal{M}.$  Each $s_i:\Sigma_i  \to \mathcal{C} $ can be lifted to a natural morphism $\Sigma_i\times \A^m \to \mathcal{S}$ because $\mathcal{S} \to \mathcal{C}\times \A^m$ induces an isomorphism over the images of $\Sigma_i\times \A^m$.

\end{Construction}

\begin{prop}\label{smoothing}
 (Notations as above.) There are a morphism $\sigma:Z\to  \A^m$ from  a curve germ $Z$,  with a commutative diagram
$$\xymatrix{
\mathcal{T} \ar@{->}[r]^{\gamma}\ar[d]_{h_Z} & \mathcal{S} \ar[d]_{h}  \\
Z         \ar@{->}[r]^{\sigma}     & \A^m }$$ \noindent where
$\mathcal{T}\subset \mathcal{S}\times_{\mathbb{A}^m}Z$ is a family of subcombs over $Z$ containing 
 $\Sigma_i \times Z \subset
\mathcal{S}\times_{\A^m} Z $ for all $i$, and a representable morphism
$\phi:\mathcal{T}\to \mathcal{M}$ such that:
\begin{enumerate}

\item[(a)]  for each $i$, the image of $\phi(\Sigma_i\times Z)$ in $\sM$ is a point;

\item[(b)] Let  $\eta\in Z$ be a general point, then the restriction $\gamma_{\eta}$ of $\gamma$ to   the  fiber   $\mathcal{T}_{\eta}:=h_Z^{-1}(\eta)$ 
 induces an 1-isomorphic to the birational transform of $\mathcal{C}\times \{\eta\}$ in $ \mathcal{S}_{\eta}:=\mathcal{S}\times_{\mathbb{A}^m}\{\eta\}$;

\item[(c)]  the fiber $\mathcal{T}_0$ of $\mathcal{T}$ over $\sigma^{-1}(\{0\})$ has $m'$ teeth with
$$m'\ge m-h^1(\mathcal{C},f^*T_{\mathcal{M}}(-\sum \Sigma_i)); \mbox{ and}$$
\item[(d)] $\phi|_{\mathcal{T}_0}$ is given by the restriction of $F$.
\end{enumerate}
\end{prop}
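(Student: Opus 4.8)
The plan is to run the classical comb-smoothing argument of Koll\'ar--Miyaoka--Mori in the twisted setting, using Corollary \eqref{dimension} as the replacement for the scheme-theoretic dimension count. First I would set up the family: the morphism $h:\mathcal{S}\to\A^m$ has central fiber $\mathcal{S}_0=\mathcal{T}$ the comb whose handle is $\mathcal{C}$ and whose $m$ teeth are the (blown-up) copies of $\P^1$ carrying the $g^j_k$, while a general fiber $\mathcal{S}_\eta$ is $1$-isomorphic to the birational transform of $\mathcal{C}$ (the teeth have been contracted away from $0$). The glued map $F:\mathcal{S}_0\to\mathcal{M}$ together with $g^j_k$ is representable because $f$ is and the teeth map into the scheme locus $M^0$; and the markings $\Sigma_i\times\A^m$ lift to $\mathcal{S}$ since the blown-up loci $W^j_k$ are disjoint from them. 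So $F$ defines a point $[F]$ in (a suitable open chart of) the relative Hom-stack $\underline{\Hom}_{\A^m}^{rep}(\mathcal{S},\mathcal{M})$, or more precisely in the space of twisted stable maps from the fibers of $\mathcal{S}/\A^m$ to $\mathcal{M}$ with the $n$ sections fixed; I would phrase this via $\underline{\Hom}$ and then impose the condition that each $\Sigma_i\times Z$ maps to a point (which is an open-and-closed type condition, giving $f^*T_{\mathcal{M}}(-\sum\Sigma_i)$ as the relevant sheaf).

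The heart of the argument is the dimension estimate. Let $\mathcal{N}\to\A^m$ be the stack of such maps; applying Corollary \eqref{dimension} fiberwise, every component of $\mathcal{N}$ through $[F]$ has dimension at least $\chi(\mathcal{S}_0,F^*T_{\mathcal{M}}(-\sum\Sigma_i))+\dim\A^m$, and I must compare this with $\dim\A^m=m$. The point is that attaching a very free tooth $g^j_k$ raises $h^0$ by roughly $\deg(g^j_k{}^*T_{\mathcal{M}})+1 \ge 2$ and, crucially, by a Mayer--Vietoris / normalization sequence for the nodal curve $\mathcal{S}_0 = \mathcal{C}\cup\bigcup C^i$ one computes that $H^1(\mathcal{S}_0,F^*T_{\mathcal{M}}(-\sum\Sigma_i))$ is a quotient of $H^1(\mathcal{C},f^*T_{\mathcal{M}}(-\sum\Sigma_i))$ once each tooth is very free (so that $H^1(C^i,\cdot)=0$ and the gluing maps at the $x_i$ are surjective on $H^0$ restricted to the node). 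Hence the ``honest'' deformations of $[F]$ inside $\mathcal{N}$, modulo the $m$-dimensional choice of which teeth to contract (the base $\A^m$), form a family of positive expected dimension unless $h^1$ is large; more precisely the number of teeth one is forced to keep in a general fiber is at most $h^1(\mathcal{C},f^*T_{\mathcal{M}}(-\sum\Sigma_i))$. Concretely: the locus in $\A^m$ over which the map does not extend (i.e.\ over which teeth cannot be smoothed) has codimension controlled by $h^1$, so a general curve germ $\sigma:Z\to\A^m$ through $0$ meets it in a scheme supported at $0$, and along $Z$ the family $\mathcal{T}\subset\mathcal{S}\times_{\A^m}Z$ together with a lift $\phi:\mathcal{T}\to\mathcal{M}$ exists with $\mathcal{T}_0$ still a comb with $m'\ge m-h^1(\mathcal{C},f^*T_{\mathcal{M}}(-\sum\Sigma_i))$ teeth and $\phi|_{\mathcal{T}_0}=F$. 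Properness of $\overline{\mathcal{M}}_{0,n}(\mathcal{M},d')$ (for the appropriate degree $d'$) guarantees the limiting family $\mathcal{T}/Z$ genuinely is a family of subcombs rather than degenerating further, and item (b) — that $\gamma_\eta$ is a $1$-isomorphism onto the birational transform of $\mathcal{C}\times\{\eta\}$ — is exactly the statement that generically the chosen deformation just forgets the contracted teeth.

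Let me make the mechanics precise. I would choose an \'etale chart $R\to Q$ for $\mathcal{N}$ at $[F]$ as in the proof of Corollary \eqref{dimension}, with $R$ smooth over $\A^m$ of relative dimension $h^0(\mathcal{S}_0,F^*T_{\mathcal{M}}(-\sum\Sigma_i))$ and $Q$ cut out by at most $h^1(\mathcal{S}_0,F^*T_{\mathcal{M}}(-\sum\Sigma_i))$ equations; the obstruction map lands in $H^1(\mathcal{S}_0,F^*T_{\mathcal{M}}(-\sum\Sigma_i))$, which by the computation above has dimension $\le h^1(\mathcal{C},f^*T_{\mathcal{M}}(-\sum\Sigma_i))$. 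The coordinates $y^j_k$ on $\A^m$ give, inside $\Spec R$, the divisors along which the $k$-th tooth on $\mathcal{D}_j$ survives; smoothing that tooth means moving off $\{y^j_k=0\}$. A dimension count shows that in any component of $\Spec Q$ through $[F]$ one can move off all but at most $h^1(\mathcal{C},f^*T_{\mathcal{M}}(-\sum\Sigma_i))$ of the coordinate hyperplanes, so picking a general curve germ $Z\subset\Spec Q$ through $[F]$ transverse to the remaining structure yields the desired $\sigma:Z\to\A^m$, and pulling back the universal family gives $\mathcal{T}$, $\gamma$, and $\phi$; conditions (a) and (d) are built into the construction and (c) is the dimension count. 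The main obstacle I anticipate is the bookkeeping in (b)–(c): showing that the general fiber $\mathcal{T}_\eta$ is $1$-isomorphic (not merely isomorphic as a coarse curve) to the birational transform of $\mathcal{C}\times\{\eta\}$, and that exactly a controlled number of teeth survive, requires carefully tracking the blow-up $\mathcal{S}\to\mathcal{C}\times\A^m$ and the stacky structure along the sections $\Sigma_i$, and making sure the cotangent-complex obstruction groups really do reduce to the handle's $H^1$ after attaching very free teeth — i.e.\ that the restriction $H^0(C^i,g^j_k{}^*T_{\mathcal{M}})\to (g^j_k{}^*T_{\mathcal{M}})|_{x_i}$ is surjective, which is where ``very free'' (as opposed to merely ``free'') is essential.
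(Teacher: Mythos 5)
Your proposal is correct and follows essentially the same route as the paper: fix the $n$ markings by working in the fiber of $\underline{\Hom}_{\A^m}(\mathcal{S},\mathcal{M}\times\A^m)\to\prod_i\underline{\Hom}(\Sigma_i,\mathcal{M})$ through $[F]$, bound the codimension of its image in $\A^m$ by $h^1(\mathcal{C},f^*T_{\mathcal{M}}(-\sum\Sigma_i))$ via Corollary~\ref{dimension} together with the comb--normalization computation of Koll\'ar II.7.9, choose a curve germ through $[F]$ whose image in $\A^m$ lies in at most that many coordinate hyperplanes, and restrict the universal family, discarding the surviving teeth. Two small points of precision: (i) the surjectivity $H^0(C^i,g^{j*}_kT_{\mathcal{M}})\to (g^{j*}_kT_{\mathcal{M}})|_{x_i}$, equivalently $H^1(C^i,g^{j*}_kT_{\mathcal{M}}(-x_i))=0$, already holds for \emph{free} teeth (each summand of the restricted pullback then has degree $\ge -1$), so freeness rather than very-freeness is what makes the obstruction group reduce to the handle's $H^1$; the very free hypothesis is reserved for the later steps (\ref{smoothing2}, \ref{tool}) where one must kill the $H^1$ of the twisted sheaf $T_{\mathcal{M}}(-\sum\Sigma_i-\Xi_j)$ and produce a very free smoothing. (ii) No appeal to properness of $\overline{\mathcal{M}}_{0,n}(\mathcal{M},d')$ is needed here: $\mathcal{T}$ is literally a closed substack of $\mathcal{S}\times_{\A^m}Z$ obtained by base change along $\sigma$ and deleting components, so the assertion that it is a family of subcombs is immediate from the construction of $\mathcal{S}$.
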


\begin{proof}
Let us consider the natural morphism between Deligne-Mumford stacks
$$\underline{\Hom}_{ \A^m}(\mathcal{S},\mathcal{M}\times \A^m) \to \prod_i \underline{\Hom}(\Sigma_i,\mathcal{ M}) .$$
\noindent
$[F]$ gives a point in $\underline{\Hom}_{ \A^m}(\mathcal{S},\mathcal{M}\times \A^m)$. Let
 $\underline{\Hom}_{ \A^m}(\mathcal{S},\mathcal{M}\times \A^m,F|_{\cup \Sigma_i})$
represent the fiber that contains $[F]$.
Let $c$ be the codimension of an irreducible component of the image of a small neighborhood of
$$[F]\in \underline{\Hom}_{ \A^m}(\mathcal{S},\mathcal{M}\times \A^m,F|_{\cup \Sigma_i}) \ \rm{in} \ \A^m .$$
There is an irreducible curve germ $Z$
$$r:(q\in Z)\to ([F]\in \underline{\Hom}_{\A^m}(\mathcal{S},\mathcal{M}\times \A^m,F|_{\cup \Sigma_i}))$$
such that the image of $r(Z)$ in $\A^m$ is contained in at most $c$ coordinate hyperplanes $(y_i=0)$. By base change we obtain a flat family $\mathcal{W}\to Z$, whose general fiber is a comb with at most $c$ teeth (and the handle 1-isomorphic to $\mathcal{C}$). From the construction, after shrinking $Z$ and removing the irreducible components corresponding to these teeth, we get a family of subcombs $\mathcal{T}$ with a representable morphism $\phi:\mathcal{T}\to \mathcal{M}$ satisfying (a), (b) and (d).

To show (c), combining (\ref{dimension}) and the argument of (\cite{kollarrc}, II.7.9), we know that there is a smooth morphism $\Spec(R) \to  \A^m$ such that (the completion of) $\underline{\Hom}_{ \A^m}(\mathcal{S},\mathcal{M}\times \A^m,F|_{\Sigma_i})$ is defined in $\Spec(R)$ by at most
$h^1(\mathcal{C},f^*T_{\mathcal{M}}(-\sum \Sigma_i))$ equations. Thus we conclude that
$c\le h^1(\mathcal{C},f^*T_{\mathcal{M}}(-\sum \Sigma_i)).$
\end{proof}

\begin{lemma}\label{orbicurve}

Let $(\mathcal{K}/k,x_i(r_i))$ be a smooth proper orbi-curve. Let $\pi:\mathcal{K}\to K$ be its coarse moduli and $\mathcal{L}$  a vector bundle on $\mathcal{K}$ of rank $r$. Then $L=\pi_*(\mathcal{L})$ is a vector bundle on $K$ of the same rank, and we have the following exact sequence
$$0\to \pi^*(L)\to \mathcal{L}\to \oplus F_i\to 0.$$
Here $F_i$ are nontrivial sheaves supported on the gerbs $x_i(r_i)$. Furthermore, we have $$H^j(\mathcal{K},\mathcal{L})=H^j(K,L) \mbox{ for } j=0,1.$$
\begin{proof}
See \cite{av}, 2.3.4.
\end{proof}

\end{lemma}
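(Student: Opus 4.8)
The plan is to prove this by a local computation along the coarse moduli map. Everything is étale-local on $K$ around the branch points, so I would first reduce to $\mu_{r}$-representation theory on $[\A^1/\mu_r]$ and then globalize. Since $\Char k=0$, the orbi-curve $\mathcal{K}$ is tame: the map $\pi\colon\mathcal{K}\to K$ is finite, it is an isomorphism over $K\setminus\{\pi(x_i)\}$, and étale-locally around each $\pi(x_i)$ it has the form $[\Spec k[z]/\mu_{r_i}]\to\Spec k[t]$ with $\zeta\in\mu_{r_i}$ acting by $z\mapsto\zeta z$ and $t=z^{r_i}$. Moreover $\pi$ is cohomologically affine, i.e. $\pi_*$ is exact on quasi-coherent sheaves and $R^{j}\pi_*=0$ for $j>0$; étale-locally this is just the vanishing of the higher $\mu_{r_i}$-cohomology, which holds because $r_i$ is invertible in $k$. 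These structural facts are the only nonformal inputs, and they are standard for tame Deligne--Mumford stacks.

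Next I would treat the pushforward and the exact sequence. Near $x_i$, a rank $r$ vector bundle on $[\Spec k[z]/\mu_{r_i}]$ is a $\mu_{r_i}$-equivariant free $k[z]$-module, and by the classification of equivariant bundles on $[\A^1/\mu_{r_i}]$ (sums of character line bundles, using that $\mu_{r_i}$ is linearly reductive) it is equivariantly isomorphic to $\bigoplus_{j=1}^{r}k[z]\,e_j$ with $\zeta\cdot e_j=\zeta^{a_{ij}}e_j$ for weights $a_{ij}\in\{0,\dots,r_i-1\}$. Taking $\mu_{r_i}$-invariants, $\pi_*\mathcal{L}$ is locally a direct sum of $r$ rank-one free $k[t]$-modules; combined with the fact that $\pi$ is an isomorphism away from the $x_i$, this shows $L=\pi_*\mathcal{L}$ is a vector bundle of rank $r$ on $K$. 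The counit $\pi^*L\to\mathcal{L}$ is an isomorphism over $K\setminus\{\pi(x_i)\}$, and in the local model it is the inclusion $\bigoplus_j z^{b_{ij}}k[z]\hookrightarrow\bigoplus_j k[z]\,e_j$ for suitable $b_{ij}\in\{0,\dots,r_i-1\}$ (with $b_{ij}=0$ exactly when $a_{ij}=0$). Since $\pi^*L$ and $\mathcal{L}$ are vector bundles of the same rank on the integral stack $\mathcal{K}$ and the map is generically an isomorphism, it is injective, and its cokernel is $\bigoplus_i F_i$ with $F_i=\bigoplus_j k[z]/(z^{b_{ij}})$ a torsion sheaf supported on the gerbe $x_i(r_i)$, nonzero precisely when $\mathcal{L}$ has a nontrivial character at $x_i$. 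This gives the desired exact sequence.

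Finally, for the cohomology comparison: since $R^{j}\pi_*\mathcal{L}=0$ for $j>0$, the Leray spectral sequence for $\pi$ degenerates and yields $H^{j}(\mathcal{K},\mathcal{L})\cong H^{j}(K,\pi_*\mathcal{L})=H^{j}(K,L)$ for all $j$, in particular for $j=0,1$.

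The main obstacle, such as it is, is purely bookkeeping: carefully identifying the cokernel sheaves $F_i$ and the exponents $b_{ij}$ in the local model, and invoking the classification of $\mu_{r_i}$-equivariant bundles on $[\A^1/\mu_{r_i}]$; once the tameness inputs above are granted, the rest is the routine invariant-theoretic computation. (Alternatively, the statement is exactly \cite{av}, 2.3.4, which one may simply cite.)
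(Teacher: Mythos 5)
The paper's proof is nothing more than the citation ``See \cite{av}, 2.3.4,'' so your proposal does not so much diverge from the paper as replace a reference with a self-contained argument. Your local computation is correct and is in effect a proof of the cited Abramovich--Vistoli lemma: reducing to the local model $[\Spec k[z]/\mu_{r_i}]\to\Spec k[t]$, decomposing an equivariant bundle into character line bundles via linear reductivity of $\mu_{r_i}$, taking invariants to get a rank-$r$ locally free $\pi_*\mathcal{L}$, identifying the cokernel of the counit as a torsion sheaf supported on the gerbes, and invoking tameness ($R^{>0}\pi_*=0$) to degenerate the Leray spectral sequence. One small but genuine improvement in your write-up: you observe that $F_i$ is nonzero precisely when $\mathcal{L}$ carries a nontrivial $\mu_{r_i}$-character at $x_i$, whereas the lemma as stated asserts that the $F_i$ are ``nontrivial'' unconditionally --- that is only true when $\mathcal{L}$ is genuinely twisted at each $x_i$. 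What the self-contained route buys is visibility of exactly which structural inputs are used (tameness, linear reductivity, the orbifold local form), at the cost of length; the paper's route simply cites the standard reference, which, as you note yourself, is the other perfectly acceptable option.
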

Under the 1-isomorphism induced by $\gamma_{\eta}$, an irreducible component  of $\mathcal{T}$ is mapped to a point by $f$ if and only if it is mapped to point by $\phi|_{\mathcal{T}_{\eta}}$. In addition to the assumptions in (\ref{construction}), now we also assume that $( f:\mathcal{C}\to \mathcal{M}, \Sigma_i)$ satisfies:
\begin{enumerate}
 \item[(4)] if  $ i\not= j$, then the image of $f(\Sigma_i)$ and $f( \Sigma_j)$  are different points in $\mathcal{M}$.
\end{enumerate}
\begin{prop}\label{smoothing2}
(Notations as above) Let
$\mathcal{E}_j=\overline{(\mathcal{C}-\sum_{1\le k \le
j}\mathcal{D}_k)}\subset \mathcal{C}$. Denote by
$\mathcal{D}_j\cap\mathcal{E}_j=\Xi_j$.

\noindent (1) There exists $d_j$, such that or any line bundle $L$ on $\mathcal{D}_j$ which is
a pull back of some line bundle on the coarse space $D_j$ with
degree at least $d_j$,
$$H^1(\mathcal{D}_j,L\otimes f|_{\mathcal{D}_j}^*T_\mathcal{M}(-\sum
\Sigma_i-\Xi_j))=0.$$

\noindent (2)  Let $d_j$ be as in (1).  Let $\eta$ be a general point on $Z$, if for any $j$,
 $$m_j\ge h^1(\mathcal{C},f^*T_{\mathcal{M}}(-\sum \Sigma_i))+d_j,$$
then we have
$H^1(\mathcal{T}_{\eta}, \phi_{\eta}^*T_\mathcal{M}(-\sum \Sigma_i))=0,$
where $\phi_{\eta}=\phi|_{\mathcal{T}_{\eta}}.$

\end{prop}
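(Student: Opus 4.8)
I plan to prove (1) by a direct computation on the orbi-curve $\mathcal{D}_j$ via Lemma \ref{orbicurve}, and to deduce (2) in two stages: a d\'evissage of $\mathcal{T}_\eta$ along its components (using the identification $\mathcal{T}_\eta\cong\mathcal{C}$ from \ref{smoothing}(b)), which reduces the vanishing to one component at a time, followed by a comb‑smoothing comparison on each non‑contracted component that brings matters back to (1).

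For (1): put $\mathcal{A}:=f|_{\mathcal{D}_j}^*T_{\mathcal{M}}(-\sum\Sigma_i-\Xi_j)$, a vector bundle of rank $r$ on the smooth proper orbi‑curve $\mathcal{D}_j$ (the marked points and the points of $\Xi_j$ cause no trouble). By Lemma \ref{orbicurve}, $H^1(\mathcal{D}_j,L\otimes\mathcal{A})=H^1(D_j,\pi_*(L\otimes\mathcal{A}))$, and if $L=\pi^*N$ for a line bundle $N$ on $D_j\cong\P^1$, the projection formula gives $\pi_*(L\otimes\mathcal{A})=N\otimes\pi_*\mathcal{A}$. Writing the fixed bundle $\pi_*\mathcal{A}$ as $\bigoplus_{i=1}^r\mathcal{O}_{\P^1}(a_i)$, we get $H^1=0$ as soon as $\deg N\ge -1-\min_i a_i$; hence $d_j:=\max\{0,\,-1-\min_i a_i\}$ works.

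For (2): via $\gamma_\eta$ identify $\mathcal{T}_\eta$ with $\mathcal{C}$ and set $\mathcal{F}:=\phi_\eta^*T_{\mathcal{M}}(-\sum\Sigma_i)$, so that $\mathcal{F}|_{\mathcal{D}_j}(-\Xi_j)=\phi_\eta|_{\mathcal{D}_j}^*T_{\mathcal{M}}(-\sum\Sigma_i-\Xi_j)$. Running the filtration $\mathcal{C}=\mathcal{E}_0\supset\mathcal{E}_1\supset\cdots\supset\mathcal{E}_s$ with the restriction sequences
$$0\to\mathcal{F}|_{\mathcal{D}_j}(-\Xi_j)\to\mathcal{F}|_{\mathcal{E}_{j-1}}\to\mathcal{F}|_{\mathcal{E}_j}\to 0,$$
and using $H^2=0$ on a curve, the desired vanishing $H^1(\mathcal{C},\mathcal{F})=0$ follows from (i) $H^1(\mathcal{D}_j,\mathcal{F}|_{\mathcal{D}_j}(-\Xi_j))=0$ for $1\le j\le s$, and (ii) $H^1(\mathcal{E}_s,\mathcal{F}|_{\mathcal{E}_s})=0$. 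For (ii): $\phi_\eta$ is constant on each connected component $\mathcal{E}'$ of the contracted curve $\mathcal{E}_s$, so $\mathcal{F}|_{\mathcal{E}'}\cong\mathcal{O}_{\mathcal{E}'}^{\oplus r}(-\sum\Sigma_i)$; by assumption (4) a connected contracted subcurve maps to a single point and hence carries at most one marked point, so $\mathcal{O}_{\mathcal{E}'}(-\sum\Sigma_i)$ has Euler characteristic $0$ and no sections, giving $H^1=0$.

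The content is (i). On $\mathcal{D}_j$ the morphism $\phi_\eta$ is, by \ref{smoothing}(d), obtained from $f|_{\mathcal{D}_j}$ by attaching the teeth of $\mathcal{T}_0$ lying on $\mathcal{D}_j$ — say $N_j$ of them, at general scheme points $p^j_1,\dots,p^j_{N_j}$, each a very free curve — and then smoothing; since by \ref{smoothing}(c) at most $h^1(\mathcal{C},f^*T_{\mathcal{M}}(-\sum\Sigma_i))$ teeth of $\mathcal{S}_0$ are dropped in forming $\mathcal{T}_0$, we have $N_j\ge m_j-h^1(\mathcal{C},f^*T_{\mathcal{M}}(-\sum\Sigma_i))\ge d_j$. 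The teeth and the smoothed nodes lie in the scheme locus of $\mathcal{D}_j$, so the construction there is purely schematic and one may imitate the comb‑smoothing comparison of \cite{kollarrc}, II.7 to get, for the general smoothing, an injection of sheaves on $\mathcal{D}_j$ with zero‑dimensional cokernel
$$f|_{\mathcal{D}_j}^*T_{\mathcal{M}}\otimes\mathcal{O}_{\mathcal{D}_j}\Big(\sum\nolimits_{k=1}^{N_j}p^j_k\Big)\hookrightarrow\phi_\eta|_{\mathcal{D}_j}^*T_{\mathcal{M}}.$$
Tensoring by $\mathcal{O}_{\mathcal{D}_j}(-\Xi_j-\sum\Sigma_i)$ preserves both the injectivity and the torsion of the cokernel, so $H^1$ of the source surjects onto $H^1$ of the target; but the source is $f|_{\mathcal{D}_j}^*T_{\mathcal{M}}(\sum_k p^j_k-\Xi_j-\sum\Sigma_i)$ with $\mathcal{O}_{\mathcal{D}_j}(\sum_k p^j_k)$ pulled back from a line bundle of degree $N_j\ge d_j$ on $D_j$, and its $H^1$ vanishes by (1). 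This proves (i), hence (2). The main obstacle is precisely this last step: transporting the classical comb‑smoothing estimate of \cite{kollarrc}, II.7 to representable maps from twisted curves — ensuring the teeth and the smoothing stay in the scheme locus, so that the degree gained is recorded by a pulled‑back line bundle (exactly the form of $L$ admitted in (1)), and routing the cohomological input through the coarse moduli map via Lemma \ref{orbicurve}. The d\'evissage, the contracted components, and (1) itself are routine.
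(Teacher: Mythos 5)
Your proof is correct and follows essentially the same route as the paper: part (1) via Lemma \ref{orbicurve}, the projection formula and the decomposition of the pushforward on $D_j\cong\P^1$; part (2) via the filtration by the subcurves $\mathcal{E}_j$, the comb-smoothing comparison from \cite{kollarrc}, II.7 on each non-contracted component $\mathcal{D}_j$ (using \ref{smoothing}(c) and the hypothesis on $m_j$ to guarantee at least $d_j$ surviving teeth), and a direct genus-0 computation on the contracted subcurve $\mathcal{E}_s$. You merely spell out a few steps that the paper leaves terse (the explicit elementary-transformation inclusion on $\mathcal{D}_j$, and the Euler-characteristic argument that $H^1(\mathcal{E}_s,\mathcal{O}(-\sum\Sigma_i))=0$ under assumption (4)).
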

\begin{proof} (1) The push-forward of $f|_{\mathcal{D}_j}^*T_\mathcal{M}(-\sum \Sigma_i-\Xi_i)$
 to the coarse space $D_j\cong \P^1$ is a vector bundle on $\P^1$,
which decomposes as $\oplus_r \mathcal{O}(a^j_r)$ for a collection of integers
$a^j_r$. Then it follows from \eqref{orbicurve} that we can choose $d_j=\max_r\{-a^j_r\}$.

\noindent
(2) Because of (\ref{smoothing}.b) we know $\mathcal{T}_{\eta}$ is 1-isomorphic to $\mathcal{C}$. Below, we will use the same notations to denote the corresponding substacks of $\mathcal{T}_{\eta}$ and $\mathcal{C}$. Tensoring $f^*T_\mathcal{M}(-\sum \Sigma_i)$ with the exact sequence (set $\mathcal{E}_0=\mathcal{T}_{\eta}\cong\mathcal{C}$)
$$ 0\to \mathcal{O}_{\mathcal{D}_j}(-\Xi_j)\to \mathcal{O}_{\mathcal{E}_{j-1}}\to \mathcal{O}_{\mathcal{E}_{j}}\to 0. $$
Applying the argument of (\cite{kollarrc}, II.7.10), (\ref{smoothing}.c) and the assumption that
 $$m_j\ge h^1(\mathcal{C},f^*T_{\mathcal{M}}(-\sum \Sigma_j))+d_j,$$
 we know that for each component $\mathcal{D}_j\subset \mathcal{T}_{\eta}$ which is not mapped to a point,
$$H^1(\mathcal{D}_j,\phi_{\eta}|_{\mathcal{D}_j}^*T_\mathcal{M}(-\sum \Sigma_i-\Xi_j))=0.$$

By induction, to prove
$H^1(\mathcal{T}_{\eta}, \phi_{\eta}^*T_\mathcal{M}(-\sum \Sigma_i))=0,$
it suffices to verify that
$H^1(\mathcal{E}_s,\phi_{\eta}^*T_\mathcal{M}(-\sum \Sigma_i))=0$, where $\mathcal{E}_s$ is the subcurve consisting of all the components which are mapped constantly.  Hence,
$$H^1(\mathcal{E}_s,\phi_{\eta}^*T_\mathcal{M}(-\sum \Sigma_i))=H^1(\mathcal{E}_s,\mathcal{O}_{\mathcal{E}_s}(-\sum \Sigma_i))^{\oplus d}=H^1(E_s,\mathcal{O}_{E_s}(-\sum \sigma_i))^{\oplus d},$$
where $d$ is the dimension of $\mathcal{M}$ and
$(E_s,\sigma_i)$ is the coarse moduli space of $(\mathcal{E}_s,\Sigma_i)$.
Since each connected component of $E_s$ is  of arithmetic genus 0, 
it follows from (4) that
the restriction of $\mathcal{O}_{E_s}(\sum \sigma_i)$ to each component
is of degree at most 1, which implies that
$H^1(E_s,\mathcal{O}_{E_s}(-\sum \sigma_i))=0$.
\end{proof}
 Let us summarize our discussion by the following theorem.
\begin{thm}[Smoothing]\label{tool}
 Let $(f:\mathcal{C}\to \mathcal{M},\Sigma_i\subset \mathcal{C})$ be an $n$-pointed twisted stable map which satisfies (1)-(3) as in \eqref{construction} and (4) as the assumption above \eqref{smoothing2}. Then there exists another $n$-pointed twisted stable map  $(g:\mathcal{D}\to\mathcal{M},\Phi_i\subset \mathcal{D})$ of genus 0 of possibly larger degree such that
\begin{enumerate}
\item $\mathcal{D}$ is smooth;
\item for each $i$, $\Sigma_i$ and $\Phi_i$ are 1-isomorphic. Moreover, under this 1-isomorphism, $f(\Sigma_i)=g(\Phi_i)$; and
\item $H^1(\mathcal{D},g^*T_{\mathcal{M}}(-\sum \Phi_i))=0.$
\end{enumerate}
In particular, if  we assume that all the $\Sigma_i$'s are  contained in
the scheme locus of $\mathcal{C}$, then $\mathcal{D}\cong \P^1$.
\end{thm}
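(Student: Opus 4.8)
The plan is to assemble the pieces already established in Section 3 and feed them into the Smoothing Proposition (\ref{smoothing}) together with the vanishing statement (\ref{smoothing2}). I would first observe that assumptions (1)--(3) of (\ref{construction}) together with (4) above (\ref{smoothing2}) are exactly the hypotheses under which both (\ref{smoothing}) and (\ref{smoothing2}) apply to $(f:\mathcal{C}\to\mathcal{M},\Sigma_i)$. So I would invoke these results for a careful choice of the numbers $m_j$ of teeth attached to the non-contracted components $\mathcal{D}_j$.

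The key step is choosing the $m_j$ large enough to kill the relevant $H^1$. Concretely, for each non-contracted component $\mathcal{D}_j$ let $d_j$ be the constant supplied by (\ref{smoothing2})(1), i.e.\ $d_j=\max_r\{-a^j_r\}$ where $\bigoplus_r\mathcal{O}(a^j_r)$ is the decomposition on the coarse curve $D_j\cong\mathbb{P}^1$ of the pushforward of $f|_{\mathcal{D}_j}^*T_{\mathcal{M}}(-\sum\Sigma_i-\Xi_j)$. Then I would set
$$m_j\ :=\ h^1\bigl(\mathcal{C},f^*T_{\mathcal{M}}(-\textstyle\sum\Sigma_i)\bigr)+d_j$$
for each $j$, and apply the Construction (\ref{construction}) with these $m_j$, producing $\mathcal{S}\to\mathbb{A}^m$ and the glued map $F:\mathcal{S}_0\to\mathcal{M}$ with its lifted sections $\Sigma_i\times\mathbb{A}^m$. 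Applying (\ref{smoothing}) gives a curve germ $Z\to\mathbb{A}^m$, a family of subcombs $\mathcal{T}\subset\mathcal{S}\times_{\mathbb{A}^m}Z$ containing the $\Sigma_i\times Z$, and a representable morphism $\phi:\mathcal{T}\to\mathcal{M}$ contracting each $\Sigma_i\times Z$ to a point, with general fiber $\mathcal{T}_\eta$ 1-isomorphic to $\mathcal{C}$ via $\gamma_\eta$ and $\mathcal{T}_0$ having at least $m'\ge m-h^1(\mathcal{C},f^*T_{\mathcal{M}}(-\sum\Sigma_i))$ teeth. Since $m=\sum_j m_j$ and each $m_j$ exceeds $h^1(\mathcal{C},f^*T_{\mathcal{M}}(-\sum\Sigma_i))+d_j-1$, the condition $m_j\ge h^1(\mathcal{C},f^*T_{\mathcal{M}}(-\sum\Sigma_i))+d_j$ of (\ref{smoothing2})(2) is satisfied, so at a general $\eta\in Z$ we get $H^1(\mathcal{T}_\eta,\phi_\eta^*T_{\mathcal{M}}(-\sum\Sigma_i))=0$.

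Now I would define $(g:\mathcal{D}\to\mathcal{M},\Phi_i)$ to be this general fiber $(\phi_\eta:\mathcal{T}_\eta\to\mathcal{M},\Sigma_i\times\{\eta\})$. Property (3) is the vanishing just obtained. For (2): under the 1-isomorphism $\gamma_\eta$ between $\mathcal{T}_\eta$ and the birational transform of $\mathcal{C}\times\{\eta\}$, each $\Sigma_i\times\{\eta\}$ corresponds to $\Sigma_i$, and since $h$ restricts to an isomorphism over the images of $\Sigma_i\times\mathbb{A}^m$ (as noted in (\ref{construction})), the section is unchanged and $g(\Phi_i)=f(\Sigma_i)$. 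For (1), smoothness of $\mathcal{D}=\mathcal{T}_\eta$: by (\ref{versal}) and the remark following it, the general object of the versal family of a twisted curve is smooth, and the vanishing $H^1(\mathcal{T}_\eta,\phi_\eta^*T_{\mathcal{M}}(-\sum\Sigma_i))=0$ combined with (\ref{dimension}) shows that $\underline{\Hom}$ is smooth at $[\phi_\eta]$ so the deformation genuinely smooths the nodes of the comb; hence after further general choice the source is a smooth orbi-curve. Finally, if all the $\Sigma_i$ lie in the scheme locus of $\mathcal{C}$, then the teeth are attached at scheme points (by construction the $p^j_k$ are scheme points) and the nodes that survive are all smoothed, so the smooth genus-$0$ twisted curve $\mathcal{D}$ has no stacky structure at all, giving $\mathcal{D}\cong\mathbb{P}^1$.

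The main obstacle I anticipate is bookkeeping rather than a genuinely new idea: one must be careful that the smoothing in (\ref{smoothing}) really does connect up all the components of the comb (not just detach teeth) and that the resulting $\mathcal{T}_\eta$ is not merely 1-isomorphic to $\mathcal{C}$ as an abstract stack but actually \emph{irreducible and smooth}. This is where (\ref{smoothing2})(2) does the work: the vanishing of $H^1$ on $\mathcal{T}_\eta$, propagated component-by-component through the exact sequences $0\to\mathcal{O}_{\mathcal{D}_j}(-\Xi_j)\to\mathcal{O}_{\mathcal{E}_{j-1}}\to\mathcal{O}_{\mathcal{E}_j}\to0$ as in (\cite{kollarrc}, II.7.10), is precisely the condition needed to ensure the obstructions to smoothing all the nodes vanish, so that a general deformation has smooth irreducible source. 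Checking that assumption (4) survives under the teeth-attaching (the teeth are glued at general points of $M^0$, away from the finitely many points $f(\Sigma_i)$) is the last routine point.
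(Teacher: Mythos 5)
Your proposal follows the paper's proof essentially step for step: choose $m_j = h^1(\mathcal{C},f^*T_{\mathcal{M}}(-\sum\Sigma_i))+d_j$, apply (\ref{construction}) and (\ref{smoothing}) to get $\mathcal{T}_\eta$, use (\ref{smoothing2}.2) to get $H^1(\mathcal{T}_\eta,\phi_\eta^*T_{\mathcal{M}}(-\sum\Sigma_i))=0$, then deform $\mathcal{T}_\eta$ within its versal family (using (\ref{versal}) and unobstructedness from the $H^1$ vanishing) to a smooth twisted curve and extend the morphism.

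One expository flaw worth fixing: you first declare $\mathcal{D}:=\mathcal{T}_\eta$ and verify (2)--(3) for that object, but $\mathcal{T}_\eta$ is the comb, which is nodal, so (1) fails outright; you then correct course by saying ``after further general choice the source is a smooth orbi-curve,'' which is the right move, but it means $\mathcal{D}$ is in fact the generic member of the extended family over the versal base, not $\mathcal{T}_\eta$ itself. Having changed $\mathcal{D}$, you must re-derive (3) for the new object; this is not automatic from the vanishing on $\mathcal{T}_\eta$, and the paper explicitly does it via upper semicontinuity of $h^1$ along the family. That one sentence is the piece your argument is missing; once it is inserted, your proof and the paper's coincide.
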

\begin{proof} For each $j$,  let $d_j$ be the constant we find in (\ref{smoothing2}.1).  In Construction \eqref{construction}, we take $m_j\ge h^1(\mathcal{C},f^*T_{\mathcal{M}}(-\sum \Sigma_i))+d_j$. Then it follows from (\ref{smoothing2}.2)
that the $n$-pointed twisted
stable map $(\phi_{\eta}:\mathcal{T}_{\eta}\to
\mathcal{M}, \Sigma_i)$ satisfies
$$H^1(\mathcal{T}_{\eta}, \phi_{\eta}^*T_\mathcal{M}(-\sum \Sigma_i))=0.$$
Since $\mathcal{T}_{\eta}$ can be written as a degeneration of a family $\mathcal{T}_S$
of smooth objects (see the remark after (\ref{versal})) over some base $S$,
we conclude that after a possible \'etale base change of $S$, $\phi_{\eta}$ can be
extended to a family of $n$-pointed twisted stable maps from $\mathcal{T}_S$ with
the restriction map on the $n$ marked gerbes being constant. We let a general
fiber of this family be our $\mathcal{D}$, thus it admits a representable morphism $g:\mathcal{D}\to \mathcal{M}$ with the marked gerbes $\Phi_i\cong \Sigma_i(1\le i \le n)$.  Then the vanishing of
$H^1(\mathcal{D},g^*T_{\mathcal{M}}(-\sum \Phi_i))$ is implied by the
upper semicontinuity. The last statement immediately follows from the fact that
$\mathcal{D}\setminus \cup_i{\Sigma_i}$ is a scheme.
\end{proof}

For varieties with only quotient singularities, we have the following result.
\begin{prop}(\cite{vi}, 2.8)\label{quot}
If $X$ a normal variety with quotient singularities, then it is the
moduli space of some smooth Deligne-Mumford stack $\mathcal{X}$. Furthermore, the coarse moduli map $\mathcal{X}\to X$ is isomorphic over the smooth locus $X^{sm}$ of $X$.
\end{prop}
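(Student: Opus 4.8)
The statement is \cite{vi}, 2.8; the plan I would follow is to produce $\mathcal{X}$ as the \emph{canonical stack} of $X$, by gluing local quotient presentations. The argument has three stages --- localize, construct the local model, and prove the local model is canonical --- after which the gluing is formal. All of the assertions (being the coarse moduli space of a smooth Deligne-Mumford stack, and the coarse map being an isomorphism over $X^{sm}$) are \'etale-local on $X$, and Deligne-Mumford stacks satisfy \'etale descent; so it suffices to (i) construct, over the members $X_\alpha$ of an \'etale cover of $X$, smooth Deligne-Mumford stacks $\mathcal{X}_\alpha$ with the stated properties, and (ii) show that any two such stacks over a common base are identified by a \emph{unique} isomorphism. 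The uniqueness in (ii) promotes the collection $\{\mathcal{X}_\alpha\}$ to descent data satisfying the cocycle condition, and $\mathcal{X}$ is the resulting glued stack.

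For (i): since $X$ has quotient singularities I may assume $X=U/G$ with $U$ a smooth affine variety and $G$ a finite group. By Chevalley-Shephard-Todd (we are in characteristic $0$), the normal subgroup of $G$ generated by the pseudo-reflections of the $G$-action on $U$ has smooth quotient; replacing $U$ and $G$ accordingly, I may assume that $G$ acts \emph{without pseudo-reflections}, i.e. that its non-free locus in $U$ has codimension $\ge 2$. Set $\mathcal{X}:=[U/G]$: it is Deligne-Mumford (its stabilizers are finite, and reduced in characteristic $0$), it is smooth because $U\to[U/G]$ is a smooth surjection from a smooth scheme, and its coarse moduli space is $U/G=X$. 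Over the image of the free locus $U^{f}\subset U$ the stack is the smooth scheme $U^{f}/G$; conversely, if $u\in U$ has nontrivial stabilizer $G_u$ then $\widehat{\mathcal{O}}_{X,[u]}\cong\widehat{\mathcal{O}}_{U,u}^{G_u}$ is the invariant ring of a nontrivial finite group acting without pseudo-reflections on a regular local ring, hence not regular (again by Chevalley-Shephard-Todd). So the isomorphism locus of $\mathcal{X}\to X$ is precisely $X^{sm}$, as needed --- and this is exactly where the reduction to small actions enters.

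For (ii), the heart of the matter: let $\mathcal{X}_i=[U_i/G_i]$, $i=1,2$, be two such presentations with the same coarse space $X$, and let $U_{12}$ be the normalization of a component of $U_1\times_X U_2$ dominating $X$. Each $U_i\to X$ is finite and \'etale over $X^{sm}$, the complement of which has codimension $\ge 2$ (as $X$ is normal); hence $U_{12}\to U_i$ is finite and \'etale over the preimage of $X^{sm}$, whose complement in $U_{12}$ is again of codimension $\ge 2$ (the morphism to $X$ being quasi-finite). Purity of the branch locus (Zariski-Nagata, valid since $U_i$ is regular) then forces $U_{12}\to U_i$ to be everywhere finite \'etale. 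Thus $U_{12}$ is simultaneously a finite \'etale atlas of $\mathcal{X}_1$ and of $\mathcal{X}_2$, so each $\mathcal{X}_i$ is the quotient groupoid $[R_i\rightrightarrows U_{12}]$ with $R_i:=U_{12}\times_{\mathcal{X}_i}U_{12}$. Now $R_i\to U_{12}$ is finite \'etale, so $R_i$ is smooth, hence normal; $R_i\to X$ is finite, so the part of $R_i$ lying over $X\setminus X^{sm}$ has codimension $\ge 2$; and over $X^{sm}$ both $R_1$ and $R_2$ are canonically $U_{12}\times_X U_{12}$. Therefore each $R_i$, together with its structure maps, is determined by this restriction, so $R_1$ and $R_2$ are canonically identified and one obtains a canonical isomorphism $\mathcal{X}_1\cong\mathcal{X}_2$ over $X$ --- independent of all choices, since everything is pinned down over the dense open $X^{sm}$ and all schemes in sight are normal.

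I expect step (ii) to be the main obstacle. Reducing to small actions and writing down the local model in (i) are routine, but organizing the various \'etale covers into a single common atlas and verifying that the resulting comparison isomorphisms are genuinely canonical --- so that they glue --- is where the care is needed. Purity of the branch locus is the structural input that makes this possible, and the repeated ``normal plus codimension $\ge 2$'' principle, propagating identities from $X^{sm}$ to all of $X$, is what keeps the argument clean.
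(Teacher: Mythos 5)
The paper does not give a proof of this proposition; it is cited directly as Vistoli's Proposition 2.8, and the proof of that result is exactly the canonical-stack construction you describe. Your argument is correct and follows the same route, with all the essential ingredients in place: reduction to small (pseudo-reflection-free) groups via Chevalley--Shephard--Todd, purity of the branch locus to produce a common finite \'etale atlas $U_{12}$, and propagation of the identification of the groupoids $R_1$, $R_2$ from $X^{sm}$ to all of $X$ using normality together with the codimension $\geq 2$ condition.
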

With all these preparations, now we can show our main result of this section.

\begin{prop}\label{finitepoints}
Let $X$ be a projective variety with only isolated quotient
singularities and ${\rm dim}(X)\ge 2$. Assume its smooth locus $X^{sm}$ is rationally
connected and there are at most finitely many points in $X^{sm}$ which are
not in the very free locus $X^{vf}$ of $X^{sm}$. Then 
$X^{sm}$ is strongly rationally connected.
\end{prop}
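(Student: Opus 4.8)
The plan is to show that every point $y \in X^{sm}$ lies in the very free locus, using the smoothing machinery of \eqref{tool} together with the stack $\sX$ attached to $X$ by \eqref{quot}. First I would fix the smooth Deligne--Mumford stack $\sX$ with coarse moduli $X$, which exists by \eqref{quot} and is an isomorphism over $X^{sm}$; since $X^{sm}$ is rationally connected and misses only finitely many points of $X^{vf}$, the open subscheme $M^0 := X^{vf}$ is dense and strongly rationally connected, and I take $\sM = \sX$. Now fix a general point $x \in X^{vf}$ and let $y \in X^{sm}$ be arbitrary; the goal is a very free curve through $y$ lying in $X^{sm}$.

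Next I would produce the degenerate object to feed into \eqref{tool}. Choose a sequence of general points $y_t \in X^{vf}$ specializing to $y$. Since $X^{vf}$ is strongly rationally connected (indeed any two general points of a smooth rationally connected variety, or a smooth point and a general point, are joined by a very free curve by \eqref{proper} applied on a smooth compactification, or directly by the definition of the very free locus combined with the standard gluing of very free curves), for each $t$ there is a very free curve $\P^1 \to X^{sm}$ through $x$ and $y_t$. Viewing these as $2$-pointed genus $0$ twisted stable maps to $\sM = \sX$ with the two marked gerbes mapping to $x$ and $y_t$ respectively (the markings lie in the scheme locus $X^{sm}$, so the gerbes are trivial), properness of $\overline{\sM}_{0,2}(\sM,d)$ gives a limiting object $(f : \sC \to \sM, \Sigma_1, \Sigma_2)$ with $f(\Sigma_1) = x$, $f(\Sigma_2) = y$, and $\Sigma_1, \Sigma_2$ in the scheme locus of $\sC$. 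By construction the image of every non-contracted component meets $X^{vf}$ (the general members do, and one checks this persists in the limit, discarding bad components if necessary and using that only finitely many points of $X^{sm}$ are excluded so a general point of each component image lands in $X^{vf}$); thus hypotheses (1)--(3) of \eqref{construction} hold, and (4) holds since $x \neq y$ can be arranged for general $x$.

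Then I would apply Theorem \eqref{tool} directly: it yields a $2$-pointed twisted stable map $(g : \sD \to \sM, \Phi_1, \Phi_2)$ of genus $0$ with $\sD$ smooth, $g(\Phi_1) = f(\Sigma_1) = x$, $g(\Phi_2) = f(\Sigma_2) = y$, and $H^1(\sD, g^*T_{\sM}(-\Phi_1 - \Phi_2)) = 0$; moreover, since $\Sigma_1, \Sigma_2$ lie in the scheme locus, the final clause of \eqref{tool} gives $\sD \cong \P^1$. The $H^1$-vanishing with two points subtracted forces $g^*T_{\sM}$ to be ample on $\P^1$: writing $g^*T_{\sM} = \bigoplus \sO(a_i)$ on $\P^1 \cong \sD$, the bundle $g^*T_{\sM}(-\Phi_1-\Phi_2) = \bigoplus \sO(a_i - 2)$ has vanishing $H^1$ only if every $a_i - 2 \geq -1$, i.e. $a_i \geq 1$. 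Finally I would pass from the stack to the variety: $g$ factors through $\sX$ and its coarse map $\sX \to X$ is an isomorphism over $X^{sm}$; the composite $\P^1 \to X$ has image meeting $X^{sm}$ (it contains $x$ and $y$), and by a general position/deformation argument — moving the smoothing curve, or noting that $g(\sD) \cap \Sing(X)$ is finite and one can choose the smoothing to avoid it since the singularities are isolated and $T_\sX$ is ample on the curve, exactly as in \eqref{finitepoints}'s role is stated — one arranges the curve to lie entirely in $X^{sm} = \sX^{sm}$. On $X^{sm}$ the map $\sX \to X$ is an isomorphism, so $T_X$ pulls back to the ample bundle $g^*T_{\sX}$, giving a very free curve through $y$ in $X^{sm}$. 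Hence $y \in X^{vf}$, and since $y$ was arbitrary, $X^{sm} = X^{vf}$, i.e. $X^{sm}$ is strongly rationally connected.

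The main obstacle I anticipate is the last step: ensuring the smoothed curve can be taken \emph{disjoint from the singular locus} while still passing through the prescribed point $y$. The ampleness of $g^*T_\sX$ lets one deform $g$ freely with $\Phi_2 \mapsto y$ fixed, and since $\dim X \geq 2$ and $\Sing(X)$ is a finite set of points distinct from $y$ (as $y \in X^{sm}$), a general such deformation avoids $\Sing(X)$; making this rigorous — that the locus of deformations hitting a fixed singular point has positive codimension in the deformation space, using the dimension count \eqref{dimension} — is where the hypothesis ${\rm dim}(X) \geq 2$ and the isolatedness of the singularities are genuinely used, and it requires care to phrase on the stack.
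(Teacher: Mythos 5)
Your proof follows the same route as the paper's: degenerate a family of $2$-pointed very free curves through a fixed general $x$ with the second marked point specializing to $y$, pass to the limit in $\overline{\mathcal{M}}_{0,2}(\mathcal{X},d)$, and apply Theorem \eqref{tool} to produce a very free twisted curve $g:\P^1\to\mathcal{X}$ through $x$ and $y$ with $g^*T_{\mathcal{X}}$ ample. The final step, which you correctly flag as the delicate point, is handled in the paper by noting that $g^*T_{\mathcal{X}}(-\Phi)$ (with $\Phi$ the marked point over $y$) is globally generated, so the evaluation map $\phi(s,g):H^0(\P^1,g^*T_{\mathcal{X}}(-\Phi))\to T_{g(s),\mathcal{X}}$ is surjective for $s\neq\Phi$, making the universal morphism $\P^1\times\underline{\Hom}(\P^1,\mathcal{X},\Phi\mapsto y)\to\mathcal{X}$ smooth away from $\Phi$ and hence equidimensional --- a general member therefore avoids the finitely many stacky points, which is exactly the genericity argument you sketch.
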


\begin{proof}  We fix a general point $x\in X^{vf}$. We need to prove for any $y\in X^{sm}$, there exists a very free curve in $X^{sm}$ which passes through $y$. Consider the Hom space $\Hom (\P^1,X,0\mapsto x)$ and the universal morphism 
$$ev: \Hom (\P^1,X,0\mapsto x) \times \P^1 \to X.$$
From our assumption, we know that there is a smooth open subscheme $V\subset \Hom (\P^1,X, 0\mapsto x)$ parametrizing very free curves, such that $y$ is in the closure of $ev(V\times \P^1)$. This implies that there exists a connected smooth curve $B^0\to V$ such that $y\in \overline{ev(B^0\times \P^1)}$.  
 Thus after a possible base change  $D^0\to B^0$, we can assume that there is a section $\Sigma^0_1: D^0 \to  D^0 \times \P^1$, such that  $y$ is in the closure of  $\Sigma^0_1(D^0)$.

Let $\Sigma^0_2$ be the section given by $D^0\times \{0\}$, then $(D^0 \times \P^1 \to X,\Sigma^0_1,\Sigma^0_2)$ induces a morphism
$u_{D^0}:D^0\to\overline{\mathcal{M}}_{0,2}(\mathcal{X},d) $, where $\mathcal{X}$ is
the smooth stack as in (\ref{quot}) and $d$ is the degree of the parametrized curves. From the properness of
$\overline{\mathcal{M}}_{0,2}(\mathcal{X},d)$, after possibly taking a 
quasi-finite cover again, we can complete $u_{D^0}$ as a family of 2-pointed
twisted stable maps
$$(\pi:\mathcal{C}\to D, f_D:\mathcal{C}\to \mathcal{X}, \Sigma_1:D\to \mathcal{C}, \Sigma_2:D\to \mathcal{C})$$ with a limiting object 
$$(f_q: \mathcal{C}_q\to
\mathcal{X},\Sigma_1(q),\Sigma_2(q))$$ such that $f_q(\Sigma_1(q))=y, f_q(\Sigma_2(q))=x $ for
a point $q$ in $D$. 

We apply (\ref{tool}) to $(f_q: \mathcal{C}_q\to \mathcal{X},\Sigma_1(q),\Sigma_2(q))$, where $\mathcal{M}$ is $\mathcal{X}$ and $\mathcal{M}^0$ is $X^{vf}$ in (\ref{construction}). From (\ref{tool}), we conclude that there is a morphism $g:\P^1 \to \mathcal{X}$, such that $g(\P^1)$ passes through $x, y $ and  $H^1(\P^1, g^*T_{\mathcal{X}}(-\Sigma_1(q)-\Sigma_2(q)))=0,$ hence $g^*T_{\mathcal{X}}(-\Sigma_1(q))$ is semi-positive. Since for $s\not=\Sigma_1(q)$, the tangent map
$$T_{s,\P^1}\oplus H^0(\P^1, g^*T_{\mathcal{X}}(-\Sigma_1(q)))\to T_{g(s),\mathcal{X}}$$ 
of the universal morphism
$\P^1\times \underline{\Hom} (\P^1,\mathcal{X}, \Sigma_1(q)\mapsto y)\to \mathcal{X}$
at $(s,g)$ is given by  $dg(s)+\phi(s,g),$
where $$\phi(s,g): H^0(\P^1, g^*T_{\mathcal{X}}(-\Sigma_1(q)))\to T_{g(s),\mathcal{X}}$$
is the evaluation morphism (cf. \cite{kollarrc}, II.3.4).  Since $\phi(s,g)$ is surjective,
we conclude that the universal morphism is smooth at $(s,g)$ for any $s\not=\Sigma_1(q)$. In particular, it is equidimensional. So we can choose a general member $[H]\in \underline{\Hom} (\P^1,\mathcal{X},\Sigma_1(q)\mapsto y)$ such that $H(\P^1)$ does not pass through the finitely many stacky points in $\mathcal{X}$. Then its image on $X$ gives a very free rational curve passing through $y$ which is entirely in the smooth locus $X^{sm}$ of $X$.
\end{proof}

\section{$C$-noncontracted minimal model program}

 In this section, we finish the proof of \eqref{main}-\eqref{singularity}. We always assume that $S$ is a projective surface with at
worst quotient singularities. We denote by $S^{sm}$ its smooth locus
and $S^{vf}$ the (possibly empty) very free locus of $S^{sm}$. Since
surface $klt$ singularities precisely mean quotient singularities
(cf. \cite{kollarmori}, 4.7 and 4.8), for any surface $S$ with $klt$ singularities,
there exists a smooth proper Deligne-Mumford stack $\mathcal{S}$
which realizes $S$ as its coarse moduli space (cf. \eqref{quot}). To prove
(\ref{main}), we will run a specific procedure of minimal model program
(MMP) for $klt$ surface pair to verify the assumptions of
(\ref{finitepoints}).
\begin{defn}
 For a $klt$ pair $(S,\Delta)$ and a fixed irreducible curve $C$ in $S$, a {\it $C$-noncontracted MMP} is a MMP process for the pair $(S,\Delta)$ $$ p:(S, \Delta)=(S_0,\Delta_0)\to (S_1,\Delta_1)\to \cdots \to (S_n,\Delta_n),$$
such that in each step, we do not contract the image of $C$.
\end{defn}
Like the usual MMP, the procedure terminates. In the last step, we will have a surface pair $(S_n,\Delta_n)$ satisfying one of the following:
\begin{enumerate}
 \item[(i)] $(S_n,\Delta_n)$ is a log del Pezzo surface with $\rho(S_n)=1$, and $0\neq p_*[C]\in N_1(S_n)$;
\item[(ii)] $(S_n,\Delta_n)$ admits a Fano contraction to a curve $B$, such that $C$ is finite over $B$;
\item[(iii)] $(S_n,\Delta_n)$ is a minimal model; or
\item[(iv)] $\rho(S_n)\ge 2$ and $\overline{\NE}(S_n)$ has only one $(K_{S_n}+\Delta_n)$-negative extremal ray $R_{\ge0}[p_*C]$.
\end{enumerate}

The reason for us to consider $C$-noncontracted MMP is encrypted in the following proposition.

\begin{prop}\label{C-nonctr}
Assume $S^{sm}$ is rationally connected. If we run a $C$-noncontracted MMP for $(S,\Delta)$ and we end in case (i) or (ii), then $C\cap S^{vf}\neq\emptyset.$
\end{prop}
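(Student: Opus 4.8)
The plan is to reduce the two cases to a uniform statement: in both case (i) and case (ii) the image $C' := p_*C$ on $S_n$ is a curve meeting the very free locus of $S_n^{sm}$, and then transport this information back along the birational map $p$. Let me first address the ``end'' surface $S_n$. In case (i), $(S_n,\Delta_n)$ is a log del Pezzo surface with Picard number $1$; in case (ii), $(S_n,\Delta_n)$ admits a Fano contraction $S_n\to B$ to a curve. In either situation the smooth locus $S_n^{sm}$ is rationally connected (Theorem~\ref{rc} in case (i); in case (ii) the general fiber of $S_n\to B$ is $\P^1$ and $B$ must be $\P^1$ since $S_n^{sm}$ dominates it and is rationally connected, so $S_n^{sm}$ is again rationally connected — one may also simply invoke that a Fano contraction to a curve forces $B\cong\P^1$ and apply \ref{rc} after noting $(S_n,\Delta_n)$ is still klt). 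The key point about these two cases, as opposed to (iii) and (iv), is that $C'$ is not contained in the fixed locus of any obstruction: in case (i), since $\rho(S_n)=1$ and $0\ne p_*[C]$, the curve $C'$ is ample, and in case (ii) it is finite over $B$, hence ``horizontal'' and again positive in the relevant direction. I would use this to show that a general deformation of $C'$ on $S_n$ sweeps out a dense open subset of $S_n^{sm}$ and, more importantly, that $C'$ passes through a general point of $S_n^{sm}$, which by \ref{rc} lies in the very free locus $S_n^{vf}$; hence $C'\cap S_n^{vf}\ne\emptyset$.

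The second and main step is to pull this back through the MMP. Each elementary step $S_i\dashrightarrow S_{i+1}$ is either a divisorial contraction or, in the surface klt setting, there are no flips, so every step is a morphism $\pi_i:S_i\to S_{i+1}$ contracting a single $(K+\Delta)$-negative extremal curve $E_i$; by the definition of a $C$-noncontracted MMP, the image of $C$ is never this contracted curve, so $E_i$ is a curve distinct from the birational transform of $C$. I would argue by descending induction on $i$: suppose $C_{i+1}:=$ (image of $C$ on $S_{i+1}$) meets $S_{i+1}^{vf}$; I want the same for $C_i$ on $S_i$. The map $\pi_i$ is an isomorphism away from finitely many points (it contracts finitely many curves to points on a surface), so $\pi_i$ restricts to an open immersion $S_i\setminus(\text{finite set})\hookrightarrow S_{i+1}$; more precisely $\pi_i^{-1}(S_{i+1}^{sm})$ is an open subset of $S_i^{sm}$ whose complement is finite, and a very free curve on $S_{i+1}^{sm}$ through a point not in the (finite) indeterminacy/contracted locus pulls back to a curve on $S_i$ with the same image away from that finite set. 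The content here is exactly Proposition~\ref{finitepoints}-style reasoning, or more directly: a very free curve is characterized by ampleness of $f^*T_S$, and on the open locus where $\pi_i$ is an isomorphism the tangent bundles agree, so freeness is preserved; one only needs to choose the deformation of $C_{i+1}$ generic enough to avoid the finitely many bad points, which is possible since $C_{i+1}$ moves in a family dominating a dense open of $S_{i+1}^{sm}$. Thus $C_i\cap S_i^{vf}\ne\emptyset$, and after finitely many steps $C\cap S^{vf}\ne\emptyset$.

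I expect the main obstacle to be the base case, i.e.\ genuinely showing $C'$ hits the very free locus on the minimal ``end'' surface, rather than merely the smooth locus: one must know not just that $S_n^{sm}$ is rationally connected but that $C'$, which is a specific curve, can be made to pass through a \emph{general} (hence very free) point of $S_n^{sm}$. In case (i) this follows from ampleness of $C'$ (its complete linear system, or a sufficiently positive multiple, moves $C'$ through any prescribed general point while staying in $S_n^{sm}$, using that the base locus and singular points form a proper closed subset). In case (ii), since $C'\to B\cong\P^1$ is finite and surjective, $C'$ meets every fiber, and a general fiber is a free $\P^1$ in $S_n^{sm}$ passing through a general point; combining $C'$ with a general fiber through a general very free point and invoking that $C'$ itself, being horizontal, can be deformed (moving in the linear system $|C'|$ or using that $-K$ is ample on fibers) to pass through such a point gives the claim. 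A subtle point worth care is that the birational transforms need not be $\P^1$ and the MMP may introduce worse quotient singularities, so throughout one works with the smooth loci and the finite exceptional sets, never assuming the curves are rational or smooth; but since ``very free'' only refers to a morphism from $\P^1$ into the smooth locus, and we are free to renormalize and deform, this causes no real difficulty once the set-up above is in place.
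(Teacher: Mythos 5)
Your proposal has a genuine and recurring gap: you repeatedly treat the fixed curve $C'$ (resp.\ $C_{i+1}$) as if it moved in a family. In the base case you argue that ``a general deformation of $C'$ on $S_n$ sweeps out a dense open subset'' and that ``$C'$ passes through a general point,'' and in the descent you ``choose the deformation of $C_{i+1}$ generic enough.'' But $C$ is a single prescribed irreducible curve in $S$, possibly of high genus, and its strict transform/image on $S_n$ is a single fixed curve with no reason to be movable; moving a large multiple $nC'$ in $|nC'|$ produces an entirely different curve, not $C'$. So the key claim $C'\cap S_n^{vf}\neq\emptyset$ is not established by your ampleness argument in case (i), nor by the horizontal-deformation argument in case (ii). (Your side remark that one can invoke Theorem~\ref{rc} on $(S_n,\Delta_n)$ in case (ii) after noting $B\cong\P^1$ is also incorrect: a klt surface with a Mori fiber structure over $\P^1$ need not be log del Pezzo.)

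The paper avoids this entirely by moving the wrong end of the intersection. It produces a \emph{free} curve $R$ in $S_n^{sm}$ meeting $C'$ --- in case (i) any very free curve from \eqref{rc}, which must meet $C'$ by Hodge Index because $\rho(S_n)=1$; in case (ii) a general fiber of the Mori fibration, which meets $C'$ since $C'$ is horizontal. It then deforms $R$ (which does move, being free) to avoid the finite non-isomorphism locus of $S\to S_n$, and pulls $R$ back in a single step to a free curve $\tilde R\subset S^{sm}$ still meeting $C$. Finally, the Hodge Index Theorem on $S$ shows $\tilde R$ meets any very free curve in $S^{sm}$, so by \cite[IV.3.9.4]{kollarrc} $\tilde R\subset S^{vf}$, giving $C\cap S^{vf}\neq\emptyset$. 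Your descending induction through the MMP is unnecessary and, as phrased, replicates the ``moving $C$'' error at every step; if you want to keep it, you must instead move very free curves through a point of $C_{i+1}\cap S_{i+1}^{vf}$ (not $C_{i+1}$ itself) and then appeal to IV.3.9.4 to conclude the lifted curve lands in $S_i^{vf}$, which is precisely the HIT/IV.3.9.4 mechanism you currently omit.
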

\begin{proof}
For each case, we will show that there is a free rational curve $h:\P^1 \to S_n^{sm}$,   such that $R=h(\P^1)$ has nonempty intersection with the image of $C$.
Assuming this for the moment, replacing $R$ by its general
deformation, we can think of $S\to S_n$ as an isomorphism over a
neighborhood of $R$. So the preimage of $R$ gives a free curve in
$S^{sm}$. Applying Hodge Index Theorem, we know that $R$ meets any very free curve in $S^{sm}$. It follows that $R\subset S^{vf}$ (cf. \cite{kollarrc}, IV.3.9.4).  Hence we conclude $C\cap S^{vf}\neq
\emptyset.$

To verify the claim: in Case(i), because of (\ref{rc}),
$S^{sm}$ contains a very free curve $R$. But $\rho(S)=1$, which
implies any two curves have a nonempty intersection. In Case(ii), we
can choose $R$ to be a general fiber.

\end{proof}

\begin{lemma}\label{mmp}
 If $p:(S,\Delta)\to(S',\Delta')$ is an extremal contraction of surface pairs and $(S,\Delta)$ is a log del Pezzo surface, then $(S',\Delta')$ is also a log del Pezzo surface.
\end{lemma}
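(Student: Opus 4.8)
The plan is to verify directly that $K_{S'}+\Delta'$ is anti-ample and that $(S',\Delta')$ remains $klt$, using that extremal contractions of surfaces are very explicit. First I would recall the classification of extremal contractions $p : (S,\Delta) \to (S',\Delta')$: either $p$ is a divisorial contraction of a single $(K_S+\Delta)$-negative extremal ray whose exceptional locus is an irreducible curve $E$ contracted to a point, or $p$ is a Mori fiber space (in which case $\rho(S')=0$ or $1$ and there is nothing to prove beyond the Fano-type conclusion, which is immediate since $-(K_S+\Delta)$ is already ample on $S$). So the substance is the divisorial case. Here I set $\Delta' = p_*\Delta$; since $p$ is birational with connected fibers, $p_*$ is surjective on divisor classes, and it is standard that $p^*(K_{S'}+\Delta') = K_S+\Delta + aE$ for some coefficient $a$, where $a \geq 0$ precisely because $(S,\Delta)$ is $klt$ and $E$ is $p$-exceptional (discrepancy argument).

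The key step is to show $-(K_{S'}+\Delta')$ is ample. I would use the Nakai--Moishezon criterion on the surface $S'$: it suffices to check $-(K_{S'}+\Delta')\cdot C' > 0$ for every irreducible curve $C' \subset S'$ and $(K_{S'}+\Delta')^2 > 0$. Given an irreducible curve $C' \subset S'$, let $C$ be its strict transform; then $p^*C' = C + bE$ for some $b\geq 0$, and
\[
-(K_{S'}+\Delta')\cdot C' = -p^*(K_{S'}+\Delta')\cdot p^*C' = -(K_S+\Delta+aE)\cdot(C+bE).
\]
Expanding, the terms $-(K_S+\Delta)\cdot C$ and $-(K_S+\Delta)\cdot bE$ are both nonnegative (in fact positive on $C$) since $-(K_S+\Delta)$ is ample on $S$; the remaining terms involve $-aE\cdot C$ and $-ab E^2$. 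Since $E$ is a contracted curve of a divisorial extremal ray, $E^2 < 0$, so $-abE^2 \geq 0$, and $E\cdot C \geq 0$ as $C \neq E$, so $-aE\cdot C \leq 0$ is the one potentially bad term; but $-(K_S+\Delta)\cdot C > 0$ dominates, and a short computation using $p_*C' = C'$ (i.e. that the negative contribution is controlled by positivity on $S$) gives strict positivity. Similarly $(K_{S'}+\Delta')^2 = (p^*(K_{S'}+\Delta'))^2 = (K_S+\Delta+aE)^2 = (K_S+\Delta)^2 + 2a(K_S+\Delta)\cdot E + a^2 E^2$; since $(K_S+\Delta)\cdot E < 0$ (the ray is $(K_S+\Delta)$-negative) wait—rather $-(K_S+\Delta)$ ample gives $(K_S+\Delta)\cdot E<0$, and $E^2<0$, so I must argue more carefully, using that $aE$ is the $p$-exceptional part and $p^*(K_{S'}+\Delta')$ is a pullback, hence $(p^*(K_{S'}+\Delta'))^2 = (K_{S'}+\Delta')^2 \geq 0$ automatically is not what I want; instead I use that $(K_{S'}+\Delta')^2 > 0$ follows because $-(K_{S'}+\Delta')$ is already shown nef and big is equivalent here once we know positivity on all curves plus the fact that $S'$ is a surface with $-(K_{S'}+\Delta')$ nef of positive self-intersection, which we get from $(p^*(-(K_{S'}+\Delta')))^2 = (-(K_S+\Delta)-aE)^2$ and the ampleness of $-(K_S+\Delta)$ together with $aE$ being a fixed contracted divisor.

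Finally I would check $(S',\Delta')$ is $klt$. This is the standard fact that $klt$-ness is preserved under pushforward along a birational extremal contraction: for any divisor $F$ over $S'$, its discrepancy $a(F; S',\Delta')$ equals $a(F;S,\Delta)$ if the center of $F$ is not contained in the image point, and if it is, one uses $p^*(K_{S'}+\Delta') = K_S+\Delta+aE$ with $a\geq 0$ to get $a(F;S',\Delta') \geq a(F;S,\Delta) > -1$. Since $\Delta'=p_*\Delta$ is automatically effective and its coefficients are $\leq$ those of $\Delta$ (again from $a\geq 0$), all coefficients stay $< 1$, and normality of $S'$ is preserved by Stein factorization.

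The main obstacle I expect is the strict positivity $-(K_{S'}+\Delta')\cdot C' > 0$ in the divisorial case: one has to package the projection-formula computation so that the positivity of $-(K_S+\Delta)$ genuinely beats the sign of the cross term $-aE\cdot C$, which requires knowing $a$ is not too large — concretely, that $0 \le a < 1$ because $(S,\Delta)$ is $klt$ and $E$ has coefficient $0$ in $\Delta$ — together with $E\cdot C$ being bounded appropriately. In the Mori fiber space subcase there is essentially nothing to prove, since the conclusion "$(S',\Delta')$ is a log del Pezzo surface" should be interpreted appropriately (if $\dim S' < 2$ the statement is vacuous or refers only to the surface case of a divisorial contraction), so I would simply note that and focus all the work on the birational case.
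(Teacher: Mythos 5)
Your proposal reaches the right conclusion but goes by the Nakai--Moishezon criterion, whereas the paper's argument is a one-liner via Kleiman's criterion: for $\alpha\in\overline{\NE}(S')\setminus\{0\}$ the pullback $p^*\alpha$ lies in $\overline{\NE}(S)\setminus\{0\}$, and
$(K_{S'}+\Delta')\cdot\alpha=(K_S+\Delta)\cdot p^*\alpha<0$,
the equality because $p^*(K_{S'}+\Delta')=K_S+\Delta+aE$ and $E\cdot p^*\alpha=0$. The two routes are close in spirit, but Kleiman avoids the self-intersection check altogether, and the orthogonality $E\cdot p^*(\,\cdot\,)=0$ removes every cross term you were worried about.

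There is a genuine sign error in your discrepancy step. Writing $p^*(K_{S'}+\Delta')=K_S+\Delta+aE$, you claim $a\geq 0$ ``because $(S,\Delta)$ is klt.'' In fact the opposite holds: intersecting with $E$ and using $p^*(K_{S'}+\Delta')\cdot E=0$ gives $aE^2=-(K_S+\Delta)\cdot E>0$, since the contracted ray is $(K_S+\Delta)$-negative; together with $E^2<0$ this forces $a<0$. Equivalently, the discrepancy $a(E;S',\Delta')=-a$ is strictly positive. The klt hypothesis bounds the magnitude of the discrepancy, not its sign; the sign comes from extremality of the contraction, not from klt-ness of $(S,\Delta)$. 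This error is load-bearing in your klt-preservation step, where the inequality $a(F;S',\Delta')\geq a(F;S,\Delta)$ actually requires $a<0$, i.e.\ $p^*(K_{S'}+\Delta')\leq K_S+\Delta$, not $a\geq 0$.

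Once the sign is fixed, the ``main obstacle'' you flag dissolves on its own. Since $E\cdot p^*C'=0$, expanding
\[
(K_{S'}+\Delta')\cdot C'=p^*(K_{S'}+\Delta')\cdot p^*C'=(K_S+\Delta)\cdot p^*C'+aE\cdot p^*C'=(K_S+\Delta)\cdot p^*C'
\]
kills the $aE$-term outright, so positivity follows immediately from the ampleness of $-(K_S+\Delta)$ with no bound on $a$ needed; the same identity removes every $E$-term in the self-intersection computation. In other words, the cross terms you tried to beat by hand cancel identically, which is precisely the observation that turns the paper's proof into a single displayed equation. On the positive side, you are right that klt-ness of $(S',\Delta')$ deserves a remark (the paper takes it as standard MMP background and only checks anti-ampleness), and the structure of your klt argument is correct once the sign of $a$ is corrected.
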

\begin{proof}
If $\alpha \in \overline{\NE}(S')\setminus \{0\}$, then $p^*(\alpha)\in \overline{\NE}(S)\setminus \{0\}$. The lemma follows from
$$(K_{S'}+\Delta')\cdot \alpha=(K_{S}+\Delta)\cdot p^*(\alpha)<0.$$
\end{proof}

\begin{prop}\label{cdm1}
If $(S,\Delta)$ is a log del Pezzo surface, the complement of $S^{vf}$ in $S^{sm}$ consists of finitely many points.
\end{prop}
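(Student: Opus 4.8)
The plan is to reduce to the situation covered by Proposition \ref{C-nonctr} by running, for each point we wish to control, a carefully chosen $C$-noncontracted MMP, and then to handle the remaining outcomes (iii) and (iv) by a separate argument. First I would fix a point $y \in S^{sm}$ at which we want to show $y \in S^{vf}$, except for finitely many exceptions. Since $S^{sm}$ is rationally connected by Theorem \ref{rc}, through a general point of $S^{sm}$ there passes a very free curve, so $S^{vf}$ is a nonempty (hence cofinite, up to a proper closed subset) open subset; the content is to rule out a curve's worth of bad points. I would choose an irreducible curve $C$ through $y$ that is "general" in a suitable sense — e.g. a general member of a very ample linear system through $y$, so that $C$ meets $S^{sm}$ in a dense open set and $C$ is not contained in the non-very-free locus a priori — and run a $C$-noncontracted MMP $(S,\Delta) = (S_0,\Delta_0) \to \cdots \to (S_n,\Delta_n)$. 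By Lemma \ref{mmp} every intermediate pair is again a log del Pezzo surface, so the process makes sense and terminates in one of the four listed cases.

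The key step is then the case analysis on the terminal outcome. In cases (i) and (ii), Proposition \ref{C-nonctr} applies directly and gives $C \cap S^{vf} \neq \emptyset$; since each step of a $C$-noncontracted MMP is an isomorphism near (a general deformation of) the image of $C$, and since $C$ passes through $y$, one concludes — at least for $C$ general in its family — that $y$ itself lies in $S^{vf}$, or more precisely that the bad locus along $C$ is a proper closed (hence finite) subset of $C$. Cases (iii) and (iv) are the ones to dispose of. Case (iii), a minimal model, cannot occur: the surface remains a log del Pezzo surface by Lemma \ref{mmp}, so $K_{S_n}+\Delta_n$ is anti-ample and in particular $-(K_{S_n}+\Delta_n)$-negative extremal rays exist as long as $\overline{\NE}$ is not a point — so the MMP cannot halt at a minimal model. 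Case (iv), where $\rho(S_n) \geq 2$ and $\overline{\NE}(S_n)$ has a unique $(K_{S_n}+\Delta_n)$-negative extremal ray $R_{\geq 0}[p_*C]$, is the genuine obstacle: here the only available contraction is exactly the one contracting $C$, which the $C$-noncontracted hypothesis forbids, so the MMP stalls without reaching (i) or (ii).

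To handle case (iv) I would argue that the "bad" curves $C$ landing in case (iv) sweep out at most a finite set of points of $S$. The point is that in case (iv) the class $p_*[C]$ spans the unique $(K+\Delta)$-negative extremal ray of a fixed log del Pezzo surface $S_n$ appearing in the MMP; since there are only finitely many intermediate models up to the finitely many MMP runs one needs, and on each such $S_n$ the extremal ray is a single ray, the curves $C$ on $S$ whose pushforward lies on that ray form a bounded family — in fact, tracing back through the (birational, isomorphisms in codimension one near $C$) MMP maps, $C$ must be the birational transform of a curve in a fixed numerical class, hence $C$ moves in a bounded family, and a general point of $S$ is not forced into case (iv). Equivalently: for $y$ outside a fixed finite set, a sufficiently general curve $C$ through $y$ cannot have $p_*[C]$ generating that one extremal ray, because a general such $C$ is big/ample-ish and its class is not proportional to a fixed ray. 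Thus for all but finitely many $y \in S^{sm}$ one can choose $C$ through $y$ so that the $C$-noncontracted MMP ends in case (i) or (ii), whence $y \in S^{vf}$ by the previous paragraph, and $S^{sm} \setminus S^{vf}$ is finite. The hard part is making the boundedness/genericity argument in case (iv) precise — controlling how the curve class transforms along the (finitely many possible) MMP runs and showing that only curves in finitely many numerical classes, sweeping out finitely many points, can be trapped there.
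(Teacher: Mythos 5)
Your overall strategy departs from the paper's in a way that leaves a genuine gap, and you have also misjudged where the difficulty lies. Let me name both problems.

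First, the choice of $C$. You fix a point $y$ and pick a \emph{general} curve $C$ through $y$, then try to conclude $y \in S^{vf}$ from $C \cap S^{vf} \neq \emptyset$. But Proposition~\ref{C-nonctr} only produces \emph{some} point of $C$ in $S^{vf}$; it says nothing about $y$ itself. Since $S^{vf}$ is open and dense, a general curve through $y$ automatically meets $S^{vf}$ regardless of whether $y \in S^{vf}$, so this conclusion is vacuous. The argument has to be run by contradiction: the complement $S^{sm}\setminus S^{vf}$ is closed (as $S^{vf}$ is open), so if it is not finite it contains an irreducible curve. Take $C$ to be (the closure in $S$ of) \emph{that} curve, i.e.\ a curve entirely contained in the non-very-free locus. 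Then showing $C \cap S^{vf} \neq \emptyset$ is an outright contradiction, which is what kills the curve. Choosing $C$ general defeats the purpose.

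Second, you treat case (iv) as ``the genuine obstacle'' and sketch a boundedness argument to show it only traps finitely many points. This is unnecessary and does not by itself close the gap: even if you bound the curves trapped in (iv), your first issue remains. More importantly, case (iv) simply cannot arise here, and the paper dispatches it in one line. By Lemma~\ref{mmp}, every intermediate model $(S_n,\Delta_n)$ is again a log del Pezzo surface, so $-(K_{S_n}+\Delta_n)$ is ample and hence $K_{S_n}+\Delta_n$ is negative on \emph{all} of $\overline{\NE}(S_n)\setminus\{0\}$. If $\rho(S_n)\ge 2$, the cone has at least two extremal rays, both $(K_{S_n}+\Delta_n)$-negative, contradicting the uniqueness built into the definition of case (iv). Case (iii) is ruled out because $\kappa(S,\Delta)=-\infty$, so there is no minimal model (your reasoning here is essentially equivalent and fine). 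The MMP therefore must terminate in (i) or (ii), and Proposition~\ref{C-nonctr} then gives $C\cap S^{vf}\neq\emptyset$, contradicting $C\subset S^{sm}\setminus S^{vf}$. Your boundedness discussion is not needed and, as written, is not precise enough to substitute for this observation.
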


\begin{proof} If there is an irreducible curve in the complement, we take its closure in $S$, and denote it as $C$. Running a $C$-noncontracted MMP, since the Kodaira dimension  $\kappa(S,\Delta)=-\infty$, $(S,\Delta)$ does not have a minimal model. So we only need to rule out the case (iv). It follows from \eqref{mmp} that  $K_{S_n}+\Delta_n$ is negative on  $\overline{\NE}(S_n)\setminus \{ 0 \}$. If $\rho(S_n)\ge 2$, it implies that there exist more than one extremal rays. In particular, at least one of them is not $\mathbb{R}_{\ge 0}[p_*C]$. But this is a contradiction.
\end{proof}

\begin{proof}[Proof of \eqref{main}]: It  now follows from \eqref{finitepoints} and \eqref{cdm1}.
\end{proof}

\begin{proof}[Proof of \eqref{duval}]: We run a (regular) $K_S$-MMP with the last surface $S_n$ yielding a Fano contraction. For each step, since the MMP process $p_i:S_i\to S_{i+1}$ contracts the exceptional curve $E_i$ to a smooth point $P_i$, we conclude that $p_i(S_i^{sm})= S^{sm}_{i+1}$, which implies that $S_{i+1}^{sm}$ is rationally  connected. 
\begin{claim}
 $S^{sm}_n$ is strongly rationally connected. 
\end{claim}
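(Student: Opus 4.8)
The plan is to reduce the claim to an application of Proposition \eqref{finitepoints} applied to the surface $S_n$. Recall that $S_n$ is a projective surface with at worst Du Val singularities and, by the Fano contraction together with Theorem \eqref{rc} or a direct argument as in \eqref{C-nonctr}, its smooth locus $S_n^{sm}$ is rationally connected. Du Val singularities are in particular isolated quotient singularities, so by Proposition \eqref{quot} there is a smooth Deligne--Mumford stack $\mathcal{S}_n$ with coarse moduli space $S_n$, isomorphic to $S_n$ over $S_n^{sm}$. Hence all the hypotheses of \eqref{finitepoints} are in place \emph{except} possibly the finiteness of the complement $S_n^{sm}\setminus S_n^{vf}$, and establishing this finiteness is exactly what the proof must do.

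First I would observe that since $S_n$ admits a Fano contraction $\psi:S_n\to B$ (either to a point, giving a log del Pezzo surface of Picard number one, or to a curve $B\cong\P^1$), we are in a situation directly analogous to \eqref{cdm1}. Concretely, suppose for contradiction that some irreducible curve $C\subset S_n^{sm}$ lies entirely in the complement of $S_n^{vf}$, and take its closure $\overline C$ in $S_n$. If $\psi$ contracts $S_n$ to a point, then $\rho(S_n)=1$ and, exactly as in Case (i) of \eqref{C-nonctr}, the very free curve furnished by rational connectedness of $S_n^{sm}$ meets $\overline C$ by the Hodge Index Theorem, forcing $\overline C\cap S_n^{vf}\neq\emptyset$, a contradiction. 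If $\psi:S_n\to B$ is a Fano contraction to a curve, then either $\overline C$ is a fiber or $\overline C$ dominates $B$; in the latter case $\overline C$ meets a general fiber $R$, and a general fiber is a free curve contained in $S_n^{sm}$ (the generic fiber of a Fano contraction of a surface with Du Val singularities is a smooth rational curve avoiding the finitely many singular points), and as in \eqref{C-nonctr} any such $R$ lies in $S_n^{vf}$, again a contradiction; if $\overline C$ is itself a fiber, then it is algebraically equivalent to a general fiber $R\subset S_n^{vf}$, and since membership in the very free locus only depends on the numerical/deformation class of a free curve through the point (cf. \cite{kollarrc}, IV.3.9.4), $C\subset S_n^{vf}$, a contradiction.

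Therefore the complement of $S_n^{vf}$ in $S_n^{sm}$ contains no curves, hence is a finite set of points, and Proposition \eqref{finitepoints} applies verbatim to $X=S_n$ to conclude that $S_n^{sm}$ is strongly rationally connected.

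The main obstacle I anticipate is the fiber case of the curve-dominates-or-is-a-fiber dichotomy when $\psi$ maps to a curve: one must be careful that a singular fiber could \emph{a priori} be the offending curve $C$, and the cleanest way around this is to note that a singular fiber is numerically proportional to (a multiple of) the general fiber and that being in the very free locus is an open condition stable under algebraic equivalence of the connecting free curves; alternatively one can simply run a $\overline C$-noncontracted MMP on $(S_n,\Delta_n)$ exactly as in the proof of \eqref{cdm1}, since Lemma \eqref{mmp} and the Fano-type hypothesis guarantee termination in case (i) or (ii), and then \eqref{C-nonctr} finishes the argument. Once finiteness is secured, the remainder is a direct citation of \eqref{finitepoints} and requires no further work.
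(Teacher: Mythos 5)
Your overall strategy matches the paper's: both reduce the claim to Proposition \eqref{finitepoints} by showing that the complement of $S_n^{vf}$ in $S_n^{sm}$ is finite, and both split according to whether the Mori fiber contraction goes to a point or to a curve. Your handling of $\rho(S_n)=1$ (re-running the Hodge-index argument of Case (i) of \eqref{C-nonctr}) and of horizontal $\overline{C}$ (using a general fiber as the connecting free curve) is sound. But two of your fallback arguments have real gaps.

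First, the vertical case. You assert that because $\overline{C}$ is a fiber, it is ``algebraically equivalent to a general fiber $R\subset S_n^{vf}$,'' and that ``membership in the very free locus only depends on the numerical/deformation class of a free curve through the point,'' concluding $C\subset S_n^{vf}$. This is not what \cite{kollarrc}, IV.3.9.4 says, and the inference is false: a special fiber may pass through singular points of $S_n$, in which case it is not a (proper, free) curve in $S_n^{sm}$ at all, and numerical equivalence to a free curve gives you nothing. In fact, you only need the weaker and easily available conclusion $\overline{C}\cap S_n^{vf}\neq\emptyset$. Note that because $\rho(S_n)=2$ and $\rho(S_n/B)=1$, every fiber is numerically proportional to the general fiber class $[F]$ (indeed one can check that every fiber is irreducible and reduced here); so for any very free curve $R\subset S_n^{sm}$ (which exists since $S_n^{sm}$ is rationally connected), one has $R\cdot\overline{C}=R\cdot F>0$, hence $R$ meets $\overline{C}$, and since $R\subset S_n^{sm}$ the intersection lies in $C$, giving $C\cap S_n^{vf}\neq\emptyset$. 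This is essentially what the paper does (``find a very free curve $R$ with $R\cdot C>0$''); the numerical-equivalence-preserves-$S^{vf}$ step you invoked is the part that doesn't hold.

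Second, the proposed alternative of running a $\overline{C}$-noncontracted MMP on $(S_n,\Delta_n)$ ``exactly as in \eqref{cdm1}'' does not go through. Lemma \eqref{mmp} is a statement about log del Pezzo surfaces, and under the hypotheses of Theorem \eqref{duval} there is no boundary and $S_n$ is not assumed to be log del Pezzo (when $S_n\to B$ is a conic bundle, $-K_{S_n}$ need not be ample). In particular, when $\overline{C}$ is itself a fiber, the only $K_{S_n}$-negative extremal contraction might be the Fano contraction along $[\overline{C}]$, so the $\overline{C}$-noncontracted MMP can get stuck in case (iv); you cannot invoke \eqref{C-nonctr}. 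So that alternative must be dropped; the direct intersection-theoretic argument above is the one that works. Minor remark: you also cite Theorem \eqref{rc} to get $S_n^{sm}$ rationally connected, but \eqref{rc} is about log del Pezzo surfaces; in the paper the rational connectedness of $S_n^{sm}$ is obtained from the chain $p_i(S_i^{sm})=S_{i+1}^{sm}$, which is the correct justification here.

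Finally, a small economy: for $\rho(S_n)=1$ the surface $(S_n,0)$ is a Gorenstein log del Pezzo, so one can simply cite Theorem \eqref{main} as the paper does rather than re-proving finiteness of the complement.
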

 \begin{proof}
If $\rho(S_n)=1$, this follows from \eqref{main}. If $S_n$ is contracted to a curve, for any curve $C\subset S_n$, it is easy to see that we can always find a very free curve $R\subset S^{sm}_n$ such that $R\cdot C>0$. This implies $S^{vf}\cap C\neq\emptyset$ and the complement of $S^{vf}$ in  $S^{sm}$ is a set of finite points, then we can apply \eqref{finitepoints}.\end{proof}

We show \eqref{duval} by decreasing induction on $i$. Since $S_{i+1}^{sm}$ is  strongly rationally connected, we can find a very free rational curve $f:\P^1\to S_{i+1}^{sm}$ passing through $P_i$ such that $f$ is an isomorphism over a neighborhood of $P_i$ in $ f(\P^1)$. Let $f':\P^1 \to S_{i}$ be the lifting of $f$. 

Since $E_i$ contains at most one singularity which is of type $A_r$ (cf. \cite{km99}, 3.3), if we let $\pi:\mathcal{S}_i\to S$ be the smooth stacky cover as in \eqref{quot}, and $h:\P^1\to \P^1$ a degree $r$ cover totally ramified at $0\mapsto 0$, then we conclude that the composition $f'\circ h:\P^1\to S_i$ factors through a morphism $g: \P^1 \to \mathcal{S}_i$ (cf. \cite{km99}, 4.7 and 4.10).   Since $g(\P^1)\cap S^{vf}_i\neq \emptyset$, we can apply \eqref{tool} to it with two general points $\Sigma_1,\Sigma_2\in \P^1$.  Thus we obtain  a very free curve $g':\P^1\to \mathcal{S}_i $, whose image is contained in the scheme locus,  and $g'(\P^1)$ meets $E_i$ at its general points. Thus we conclude that 
$E_i\cap S^{vf}_i\neq \emptyset.$
\end{proof}

\noindent
{\it Proof of (\ref{singularity})}: Let us consider a resolution $g:Y\to S$. Then $Y$ is a smooth rational surface. We have
$H^i(Y,\mathcal{O}_Y)=0, \forall i>0.$
The Leray spectral sequence says
$H^i(S,Rg_*^j(\mathcal{O}_Y))\Rightarrow H^{i+j}(Y,\mathcal{O}_Y).$

Because $S$ is normal, we have
$$H^0(S,Rg_*^1\mathcal{O}_Y)\cong H^2(S,\mathcal{O}_S)\cong H^0(S,\omega_S).$$
\noindent Then it follows from the argument of (cf. \cite{kollarrc},
IV.3.8) that the rational connectedness of $S^0$
implies the vanishing of $H^0(S,\omega_S)$.

\section{A Gorenstein log del Pezzo surface in characteristic 2 }

 In this section, we present an example due to Koll\'ar,
which is a del Pezzo surface of degree 2 over $\F_2$ with $A_1$-singularities,
whose smooth locus does not contain any free rational curves, even after any field extension of $\F_2$. This means \eqref{rc} fails if we drop the assumption that the ground field has characteristic 0. 

 We start with $\P^2_{\F_2}$, it has seven $\F_2$-points.
There are also seven lines defined over $\F_2$, each of which passes through
precisely three $\F_2$-points. We blow up all these seven
$\F_2$-points to get a degree 2 {\it weak del Pezzo surface} $X$, i.e., $-K_X$ is big and nef. The birational transforms of these $\F_2$-lines are
$(-2)$-curves on $X$. After contracting all these $(-2)$-curves, we have a
Gorenstein log del Pezzo surface $S$ of degree 2, which contains seven
$A_1$-points. 

As any degree 2 Gorenstein log del Pezzo surface, $S$ is a degree 4 hypersurface in $\P(1,1,1,2)$ (cf. \cite{dol}, 8.6.1). More precisely, $|-K_X|$ is a base-point-free linear system which yields a generically degree 2 morphism $h:X\to \P^2$. Since $h$ contracts all the birational transforms of lines, we conclude that $h$ factors through a morphism $g:S\to \P^2$. This means that the usual presentation of  a smooth degree 2 del Pezzo surface as a double cover of $\P^2$ branched over a quartic curve can be generalized to $S$.

In more explicit terms, $S$ is given by an equation $x_3^2=f_4(x_0,x_1,x_2)$, where $f_4$ is
 a degree 4 homogeneous polynomial. Let $C\in |\mathcal{O}_{\P^2}(4)|$ be the quartic curve given by $f_4$. We know that $S$
 is singular at $(x_0,x_1,x_2,x_3)$ if and only if $C$ is singular at $(x_0,x_1,x_2)$. Moreover, the fact that $S$ has only $A_1$ singularities is equivalent to saying that $f$ only has non-degenerate critical points.  For more detailed explanation, see
 (\cite{kollarrc} V.5.6).

\begin{lemma} Let $I$ be the ideal sheaf of the seven
$A_1$ singularities on $S$.
 There exists an exact sequence oh the following form
$$0\to g^*I(1)\to \Omega^1_S\to g^*\mathcal{O}_{\P^2}(-2)\to 0.$$
\end{lemma}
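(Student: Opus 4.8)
The statement to prove is the existence of a short exact sequence
$$0\to g^*I(1)\to \Omega^1_S\to g^*\sO_{\P^2}(-2)\to 0,$$
where $g:S\to \P^2$ is the degree $2$ cover of $\P^2$ realizing $S\subset \P(1,1,1,2)$ as $x_3^2=f_4(x_0,x_1,x_2)$, and $I$ is the ideal sheaf of the seven $A_1$ points. My plan is to extract this from the usual conormal/cotangent sequence of the cyclic double cover, being careful at the (finitely many) singular points. First I would work on the smooth locus $S^{sm}$, where $g$ is a finite flat degree $2$ morphism branched along the quartic $C=(f_4=0)\subset\P^2$. There the standard theory of cyclic covers gives an exact sequence $0\to g^*\Omega^1_{\P^2}\to \Omega^1_{S^{sm}}\to \Omega^1_{g}\to 0$, and the relative differentials $\Omega^1_g$ of a degree $2$ cover are supported on the ramification divisor $R=g^{-1}(C)_{\red}$, with $\Omega^1_g\cong \sO_R(R)|_R$ or, what is cleaner to track, one uses that $d(x_3^2-f_4)=2x_3\,dx_3 - df_4$ defines the conormal behavior, so that on affine charts $\Omega^1_S$ is presented as the cokernel of the Jacobian $1\times 1$ matrix $(\,-\del f_4,\ 2x_3\,)$ acting on the trivial rank-$2$ bundle. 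Taking the projective (twisted) version, one gets on all of $S$ the presentation coming from the conormal sequence of $S\hookrightarrow \P(1,1,1,2)$; restricting $\Omega^1_{\P(1,1,1,2)}$ along $g$ and using $\Omega^1_{\P^2}$ via the Euler sequence on $\P^2$ is what produces the two outer terms $g^*I(1)$ and $g^*\sO_{\P^2}(-2)$ up to the twist conventions.

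More precisely, the second step is to identify each outer term. The quotient $\Omega^1_S\to g^*\sO_{\P^2}(-2)$ should be $g^*$ of the surjection $\Omega^1_{\P^2}\to$ (a line bundle) that on a double cover contributes the "new" differential $dx_3$; chasing the weights in $\P(1,1,1,2)$, $x_3$ has weight $2$ and the relation $f_4$ has weight $4$, so $dx_3$ lives in a twist by $\sO(2-4)=\sO(-2)$ pulled back from $\P^2$, giving the right-hand term. The sub-term $g^*I(1)$ is the image of $g^*\Omega^1_{\P^2}$ inside $\Omega^1_S$: on $S^{sm}$ this map $g^*\Omega^1_{\P^2}\to \Omega^1_S$ is injective with torsion-free cokernel, but it fails to be a subbundle exactly along $R$, and the amount of failure is measured by $df_4$, i.e. by the ideal generated by the partials of $f_4$; pushing this down through the isomorphism $g|_R: R\xrightarrow{\sim} C$, the degeneracy is concentrated over the seven singular points of $C$ (the non-degenerate critical points of $f_4$), which is precisely $g^*I$. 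The twist by $1$ comes from the Euler sequence $0\to\sO_{\P^2}(-1)\to\sO_{\P^2}^{\oplus 3}\to T_{\P^2}(-1)\to 0$ dualized, combined with the $\sO(1)$ coming from the weight-$1$ coordinates $x_0,x_1,x_2$. I would assemble all of this by first writing the exact diagram on $S^{sm}$, checking it extends over the seven points since $I$ is a coherent reflexive-ish modification (equivalently, $S$ is $S_2$ and the seven points are codimension $2$, so a sequence of torsion-free sheaves that is exact off them and whose terms have no embedded points is automatically exact if it is exact in codimension $1$), and then verifying the isomorphism classes of the kernel and cokernel by a local computation at one $A_1$ point.

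The main obstacle I anticipate is the bookkeeping at the $A_1$ singular points: on the nose, $\Omega^1_S$ is not locally free there, and I need to make sure the "obvious" sequence from the smooth locus really extends to a short exact sequence of the claimed sheaves rather than only being exact up to torsion or only in codimension one. The clean way to handle this is: (i) note all three sheaves in the proposed sequence are torsion-free (indeed $g^*I(1)$ and $g^*\sO_{\P^2}(-2)$ are pullbacks of torsion-free sheaves under the flat map $g$, and $\Omega^1_S$ is torsion-free because $S$ is reduced and, being klt hence normal and Cohen--Macaulay, satisfies Serre's $S_2$); (ii) construct the maps globally from the conormal sequence of $S$ in $\P(1,1,1,2)$ together with $g^*$ of the Euler sequence on $\P^2$, so exactness need only be checked where everything is locally free, i.e. on $S^{sm}$; (iii) do the one explicit local model computation at $x_3^2 = x_1^2+x_2^2$ (an $A_1$ point) to confirm the stalk of the image of $g^*\Omega^1_{\P^2}$ is exactly $g^*I$ there, pinning down the twist. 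A secondary nuisance is keeping the grading/twist conventions on $\P(1,1,1,2)$ consistent — in particular that $K_S=g^*\sO_{\P^2}(-1)$ and that the relevant adjunction/conormal bundle of $S$ is $g^*\sO_{\P^2}(?)$ — but once the degree $4$ relation in weights $(1,1,1,2)$ is written out this is mechanical, and the reference to (\cite{kollarrc}, V.5.6) already records the needed facts about this surface as a double plane. With these in hand the sequence follows.
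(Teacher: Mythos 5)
Your approach --- extracting the sequence from the degree-$2$ cover/weighted-hypersurface presentation $x_3^2=f_4(x_0,x_1,x_2)$ and then identifying the two outer terms by local computation and Chern-class bookkeeping --- is in substance the same as the paper's. The paper simply delegates the construction of the exact sequence to Koll\'ar's in \cite{ko95}, \S 9.4, which reads
$$0\to g^*\,\mathrm{coker}\bigl(df_4:\sO_{\P^2}(-4)\to\Omega^1_{\P^2}\bigr)\to \Omega^1_S\to g^*\sO_{\P^2}(-2)\to 0,$$
and then identifies $\mathrm{coker}(df_4)\cong I(1)$ by computing the cokernel locally at a nondegenerate critical point of $f_4$ and then computing $c_1$. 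Your description of the subobject as ``the image of $g^*\Omega^1_{\P^2}$ in $\Omega^1_S$'' agrees with $g^*\mathrm{coker}(df_4)$, since $g^*df_4$ dies in $\Omega^1_S$ (this uses characteristic $2$: $d(x_3^2)=2x_3\,dx_3=0$). One genuine simplification in the paper's version is that all the work is done on $\P^2$, which is smooth, so it never has to worry about exactness at the singular points of $S$; your plan carries out the analogous computation on $S$ itself and therefore must confront the $A_1$ points directly.

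Two points in your plan need repair. First, the auxiliary principle you invoke --- that a three-term complex of torsion-free coherent sheaves on $S$ which is exact away from a codimension-$2$ set is automatically exact --- is false. On $\A^2$ the Koszul-type complex $0\to\sO\xrightarrow{(x,y)}\sO^2\xrightarrow{(y,-x)}\sO$ has all terms free (in particular torsion-free and without embedded points) and is exact off the origin, yet the rightmost map is not surjective (its cokernel is a skyscraper). Torsion-freeness forces injectivity of the first map, but middle-exactness and right-exactness have to be checked; your proposed step (iii) (a local computation at the $A_1$ points) does in fact do this, so the error is in the stated justification rather than in the plan, but you should replace the claim rather than lean on it. Second, the local model you propose, $x_3^2=x_1^2+x_2^2$, is not an $A_1$ over $\F_2$: in characteristic $2$ it is $(x_3+x_1+x_2)^2=0$, a nonreduced double plane, and moreover $d(x_1^2+x_2^2)=0$, so the cokernel computation would come out with the wrong rank. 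The nondegenerate quadratic form in characteristic $2$ is $x_1x_2$ (Hessian $\left(\begin{smallmatrix}0&1\\1&0\end{smallmatrix}\right)$), so the correct local model is $x_3^2=x_1x_2$, and the relevant computation is $(dx,dy)/(y\,dx+x\,dy)\cong(x,y)\cdot\tfrac{dx}{x}$, which is what produces the maximal-ideal factor and hence, globally, the factor $I$.
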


\begin{proof}
This is essentially the exact sequence of (\cite{ko95}, 9.4). In fact, thanks to the Koll\'ar's construction in \cite{ko95}, we know that $f_4$ gives a morphism
 $df_4:\mathcal{O}_{\P^2}(-4) \to \Omega^1_{\P^2}$
and there exists an exact sequence
$$0\to g^*\mbox{coker}(\mathcal{O}_{\P^2}(-4) \to \Omega^1_{\P^2})\to \Omega^1_S\to g^*\mathcal{O}_{\P^2}(-2)\to 0.$$
(For the meaning of this exact sequence, see (\cite{ko95}, 9).) Choosing
local coordinates to compute, we know
$$(dx,dy)/(xdx+ydy) \cong (x,y)\frac{dx}{y},$$
Therefore, the cokernel of $df_4$ which is a rank one sheaf,  can be written as $I\otimes L$ for some line bundle $L$.  To determine $L$, we compute the first Chern class
$$c_1(\mbox{coker}(\mathcal{O}_{\P^2}(-4) \to \Omega^1_{\P^2}))=c_1(\mathcal{O}_{\P^2}(1)),$$
\noindent
thus $\mbox{coker}(\mathcal{O}_{\P^2}(-4) \to \Omega^1_{\P^2})\cong I(1)$ as $c_1(I)=0$.
\end{proof}

Now for any rational curve $f:\P^1\to S^{sm}$, pulling
back the exact sequence by $f^*$, we know that 
$$0\to  f^*\mathcal{O}_{\P^2}(1)\to f^*\Omega^1_S.$$
Hence it is not a free curve. 

\begin{q}
 It is interesting to ask whether there exists a smooth Fano variety which is defined over a field of characteristic $p>0$ and does not contain any (very) free curve.
\end{q}

%%%%%%%%%%%%%%%%%%%%%%%%%%%%%%%%%%%%%%%%%%%%%%%%%%%%%%%%%%%%%%%%%%%%%%%%

\noindent 2-380, Department of Mathematics, Massachusetts Institute of
Technology,\\
\noindent 77 Massachusetts Avenue Cambridge, MA 02139\\
\noindent {\it email} cyxu@math.mit.edu

\end{document}